\def\ge{\geqslant}
\def\le{\leqslant}
\def\a{\alpha}
\def\b{\beta}
\def\g{\gamma}
\def\D{\Delta}
\def\e{\epsilon}
\def\o{\omega}
\def\p{\pi}
\def\s{\sigma}
\def\l{\lambda}
\def\i{^{-1}}
\def\<{\langle}
\def\>{\rangle}
\newcommand{\BB}{\ensuremath{\mathbb {B}}\xspace}
\newcommand{\BC}{\ensuremath{\mathbb {C}}\xspace}
\newcommand{{\BG}}{\ensuremath{\mathbb {G}}\xspace}
\newcommand{{\BK}}{\ensuremath{\mathbb {K}}\xspace}
\newcommand{\BN}{\ensuremath{\mathbb {N}}\xspace}
\newcommand{\BQ}{\ensuremath{\mathbb {Q}}\xspace}
\newcommand{\BR}{\ensuremath{\mathbb {R}}\xspace}
\newcommand{\BZ}{\ensuremath{\mathbb {Z}}\xspace}
\newcommand{\CB}{\ensuremath{\mathcal {B}}\xspace}
\newcommand{\CI}{\ensuremath{\mathcal {I}}\xspace}
\newcommand{\CT}{\ensuremath{\mathcal {T}}\xspace}
\newcommand{\Irr}{\mathrm{Irr}}
\DeclareMathOperator{\rank}{rank}
\newcommand{\reg}{{\mathrm{reg}}}
\DeclareMathOperator{\supp}{supp}
\newtheorem{theorem}{Theorem}
\newtheorem{proposition}[theorem]{Proposition}
\newtheorem{lemma}[theorem]{Lemma}
\newtheorem {conjecture}[theorem]{Conjecture}
\theoremstyle{definition}
\numberwithin{equation}{section}
\numberwithin{theorem}{section}
\renewcommand{\to}{%
   \ifbool{@display}{\longrightarrow}{\rightarrow}%
   }
\let\shortmapsto\mapsto
\renewcommand{\mapsto}{%
   \ifbool{@display}{\longmapsto}{\shortmapsto}%
   }
\newlength{\olen}
\newlength{\ulen}
\newlength{\xlen}
\newcommand{\xra}[2][]{%
   \ifbool{@display}%
      {\settowidth{\olen}{$\overset{#2}{\longrightarrow}$}%
       \settowidth{\ulen}{$\underset{#1}{\longrightarrow}$}%
       \settowidth{\xlen}{$\xrightarrow[#1]{#2}$}%
       \ifdimgreater{\olen}{\xlen}%
          {\underset{#1}{\overset{#2}{\longrightarrow}}}%
          {\ifdimgreater{\ulen}{\xlen}%
             {\underset{#1}{\overset{#2}{\longrightarrow}}}
             {\xrightarrow[#1]{#2}}}}%
      {\xrightarrow[#1]{#2}}
   }
\newcommand{\xyra}[2][]{%
   \settowidth{\xlen}{$\xrightarrow[#1]{#2}$}%
   \ifbool{@display}%
      {\settowidth{\olen}{$\overset{#2}{\longrightarrow}$}%
       \settowidth{\ulen}{$\underset{#1}{\longrightarrow}$}%
       \ifdimgreater{\olen}{\xlen}%
          {\mathrel{\xymatrix@M=.12ex@C=3.2ex{\ar[r]^-{#2}_-{#1} &}}}%
          {\ifdimgreater{\ulen}{\xlen}%
             {\mathrel{\xymatrix@M=.12ex@C=3.2ex{\ar[r]^-{#2}_-{#1} &}}}
             {\mathrel{\xymatrix@M=.12ex@C=\the\xlen{\ar[r]^-{#2}_-{#1} &}}}}}%
      {\mathrel{\xymatrix@M=.12ex@C=\the\xlen{\ar[r]^-{#2}_-{#1} &}}}%
   }
\newcommand{\xla}[2][]{%
   \ifbool{@display}%
      {\settowidth{\olen}{$\overset{#2}{\longleftarrow}$}%
       \settowidth{\ulen}{$\underset{#1}{\longleftarrow}$}%
       \settowidth{\xlen}{$\xleftarrow[#1]{#2}$}%
       \ifdimgreater{\olen}{\xlen}%
          {\underset{#1}{\overset{#2}{\longleftarrow}}}%
          {\ifdimgreater{\ulen}{\xlen}%
             {\underset{#1}{\overset{#2}{\longleftarrow}}}
             {\xleftarrow[#1]{#2}}}}%
      {\xleftarrow[#1]{#2}}
   }
\newcommand{\isoarrow}{%
   \ifbool{@display}{\overset{\sim}{\longrightarrow}}{\xrightarrow\sim}%
   }
\begin{document}

\title[]{Total positivity and conjugacy classes}

\author{Xuhua He}
\address{X.~H., The Institute of Mathematical Sciences and Department of Mathematics, The Chinese University of Hong Kong, Shatin, N.T., Hong Kong SAR, China}
\email{xuhuahe@math.cuhk.edu.hk}
\author{George Lusztig}
\address{G.~L., Department of Mathematics, M.I.T., Cambridge, MA 02139, USA}
\email{gyuri@mit.edu}

\thanks{}

\keywords{Reductive groups, total positivity, conjugacy classes}
\subjclass[2010]{15B48, 20E45, 20G20}


\begin{abstract}
In this paper we study the interaction between the totally positive monoid $G_{\ge 0}$ attached to a connected reductive group $G$ with a pinning and the conjugacy classes in $G$. In particular we study how a conjugacy class meets the various cells of $G_{\ge0}$. We also state a conjectural Jordan decomposition for $G_{\ge0}$ and prove it in some special cases.
\end{abstract}

\maketitle

\section*{Introduction}

Let $G$ be a connected reductive algebraic group over $\BC$. Let $G_{>0}\subset G_{\ge0}$ be the totally positive submonoids of $G$ associated to a pinning of $G$ in \cite{Lu94}. (In the case where $G=GL_n(\BC)$, the definition of $G_{>0},G_{\ge0}$ goes back to Schoenberg and to Gantmacher-Krein in the 1930's.) In this paper we are mainly interested in the interaction between these monoids and the conjugacy classes of $G$. Here are some examples of such interaction in the earlier literature.

In \cite{GK}, Gantmacher and Krein showed that when $G=GL_n(\BC)$, any matrix in $G_{>0}$ has $n$ distinct eigenvalues which are all in $\BR_{>0}$. This result was generalized in \cite{Lu94} to a general $G$ by showing that 

(a) any $g\in G_{>0}$ is regular semisimple and contained in a $\BR$-split maximal torus.

In \cite{Cr76}, it is shown that when $G=GL_n(\BC)$, any matrix in $G_{\ge0}$ has all eigenvalues in $\BR_{>0}$. This result was generalized in \cite{Lu19} to a general $G$ by showing that 

(b) any $g\in G_{\ge0}$ acts on any finite dimensional representation of $G$ with all eigenvalues in $\BR_{>0}$. 

In \cite{Lu94} it is shown that $G_{\ge0}$ contains ``relatively few'' unipotent elements in the sense that

(c) if $C$ is a unipotent class in $G$ then $C\cap G_{\ge0}$ has dimension less than or equal to half the dimension of the set of real points of $C$ (and it can be empty).

Let $G^{\reg}$ be the open subset of $G$ consisting of regular elements in the sense of Steinberg. Let $G^{\reg, \text{ss}}$ be the open subset of $G$ consisting of regular semisimple elements. 

In \cite{Lu94} a partition of $G_{\ge0}$ into cells indexed by two Weyl group elements was defined. In this paper we show that each cell of $G_{\ge0}$ contains regular elements (and even regular semisimple elements). We describe explicitly the cells of $G_{\ge0}$ which are entirely contained in $G^{reg}$ and also the cells which are entirely contained in $G^{\reg, \text{ss}}$.

We show that if $G$ is almost simple, then the set of regular unipotent elements in $G_{\ge0}$ is contained in the union of the unipotent radicals of the two Borel subgroups which are part of the pinning.

If $g\in G$, we have a Jordan decomposition $g=g_sg_u=g_ug_s$ with $g_u$ unipotent and $g_s$ semisimple. Let $H$ be the centralizer of $g_s$ in $G$ (by \cite{Lu19}, $H$ is a connected reductive group). If $g\in G_{\ge0}$ then $g_u,g_s$ are not necessarily contained in $G_{\ge0}$. 

As a substitute, in this paper we state a conjecture which says that $H$ has something close to a pinning with respect to which we can define $H_{\ge0}$ and we have $g_u\in H_{\ge0},g_s\in H_{\ge0}$. This conjecture is proved in some cases in this paper. 

A consequence of this conjecture and of (c) is that, if $C'$ is a non-semisimple conjugacy class in $G$, defined over $\BR$, then $C'\cap G_{\ge0}$ has dimension less than the dimension of the set of real points of $C'$.

\section{Pinnings and total positivity}

\subsection{Totally nonnegative monoid}
We assume that $G$ is defined and split over $\BR$. We fix a pinning $\mathbf P=(B^+, B^-, T, x_i, y_i; i \in I)$. Let $W=N_G(T)/T$ be the Weyl group of $G$ and $\{s_i\}_{i \in I}$ be the set of simple reflection in $W$. Let $G_{\ge 0}$ be the totally nonnegative submonoid of $G$ introduced in \cite{Lu94}. Let $w \in W$ and $w=s_{i_1} s_{i_2} \cdots s_{i_n}$ be a reduced expression of $w$. We set 
\begin{align*} U^+_{w, >0} &=\{x_{i_1}(a_1) x_{i_2}(a_2) \cdots x_{i_n}(a_n); a_1, \ldots, a_n>0\}; \\
U^-_{w, >0} &=\{y_{i_1}(a_1) y_{i_2}(a_2) \cdots y_{i_n}(a_n); a_1, \ldots, a_n>0\}.
\end{align*} 
For any $w_1, w_2 \in W$, we set $G_{w_1, w_2, >0}=U^+_{w_1, >0} T_{>0} U^-_{w_2, >0}$. By \cite[\S 2.11]{Lu94}, $G_{w_1, w_2, >0}=U^-_{w_2, >0} T_{>0} U^+_{w_1, >0}$ and $$G_{\ge 0}=\sqcup_{w_1, w_2 \in W} G_{w_1, w_2, >0}.$$

For any $i \in I$, let $\a_i$ be the corresponding simple root. Set $\dot s_i=x_i(1) y_i(-1) x_i(1)$. For any $w \in W$, it is known that for any reduced expression $w=s_{i_1} s_{i_2} \cdots$ of $w$, the element $\dot s_{i_1} \dot s_{i_2} \cdots$ is independent of the choice of the reduced expression. We denote this element by $\dot w$. For any element $w \in W$, let $\supp(w)$ be the set of simple reflections occurring in some (or equivalently, any) reduced expression of $w$.

For any $J \subset I$, let $P^+_J \supset B^+$ be the standard parabolic subgroup of $G$, $P^-_J \supset B^-$ be the opposite parabolic subgroup and $L_J=P^+_J \cap P^-_J$ be the standard Levi subgroup. For any parabolic subgroup $P$, let $U_P$ be its unipotent radical. Then $P^\pm_J=L_J U_{P^\pm_J}$. Let $\Phi$ be the root system of $G$ and $\Phi_J \subset \Phi$ be the subsystem attached to $L_J$. The Weyl group $W_J$ of $L_J$ is the subgroup of $W$ generated by $s_j$ for $j \in J$. We denote by $w_J$ the longest element in $W_J$. We simply write $U^{\pm}_{>0}$ for $U^{\pm}_{w_I, >0}$ and write $G_{>0}$ for $G_{w_I, w_I, >0}$. The set $G_{>0}$ is the totally positive part of $G$. 

For any $J \subset I$, let $W^J$ (resp. ${}^J W$) be the set of minimal length elements in their cosets in $W/W_J$ (resp. $W_J \backslash W$). For any $J, J' \subset I$, we simply write ${}^J W^{J'}$ for ${}^J W \cap W^{J'}$. 

\subsection{Weak pinnings}\label{sec:weakp}
We define an equivalence relation on the set of pinnings of $G$. Two pinnings $\mathbf P=(B^+,B^-,T,x_i,y_i; i\in I)$, $\mathbf P'=(B'{}^+,B'{}^-,T',x'_i,y'_i; i\in I')$ are said to be equivalent if $T=T',I=I'$ and if there exists $c=(c_i)_{i\in I}\in\BR_{>0}^I$ such that for any almost simple ``factor'' $G_1$ of the derived subgroup of $G$ we have 
\begin{itemize}
    \item either $B^+\cap G_1=B'{}^+\cap G_1$, $B^-\cap G_1=B'{}^-\cap G_1$, $x'_i(a)=x_i(c_ia)$, $y'_i(a)=y_i(c_i\i a)$ for all $i\in I,a\in\BC$; 
    
    \item or $B^+\cap G_1=B'{}^-\cap G_1$, $B^-\cap\ G_1=B'{}^+\cap G_1$, $x'_i(a)=y_i(c_ia)$, $y'_i(a)=x_i(c_i\i a)$ for all $i\in I,a\in\BC$.
\end{itemize}

This is an equivalence relation on the set of pinnings of $G$. An equivalence class of pinnings is called a {\it weak pinning} of $G$. We denote by $\underline{\mathbf P}$ the weak pinning containing a pinning $\mathbf P$.

The monoids $G_{\ge0},G_{>0}$ attached in \cite{Lu94} to a pinning of $G$ depend only on the equivalence class of that pinning hence can be viewed as being associated to a weak pinning of $G$. To show that the monoid $G_{\ge 0}$ determines a unique weaking pinning of $G$, we first prove the following lemma. 

\begin{lemma}\label{sec:reg-uni}
Suppose that $G$ is almost simple. Let $G^{\reg, \text{uni}}$ be the set of regular unipotent elements in $G$. Then $$G^{\reg, \text{uni}} \cap G_{\ge 0}=\bigsqcup_{w \in W, \supp(w)=I} U^+_{w, >0} \sqcup \sqcup_{w \in W, \supp(w)=I} U^-_{w, >0}.$$

In particular, $G^{\reg, \text{uni}} \cap G_{\ge 0}$ has two connected components and they are dense in the unipotent radical of two opposite Borel subgroups $B^+$ and $B^-$. 
\end{lemma}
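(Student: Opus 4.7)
My plan is to prove both inclusions separately, relying on the unique cell decomposition $g=u_+ t u_- \in G_{w_1,w_2,>0}$ with $u_\pm \in U^\pm_{w_i,>0}$ and $t\in T_{>0}$.

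For the easy direction: take $g=x_{i_1}(a_1)\cdots x_{i_n}(a_n)\in U^+_{w,>0}$ with all $a_k>0$ and $\supp(w)=I$. By Baker--Campbell--Hausdorff the component of $\log(g)$ in each simple root space $\mathfrak{g}_{\alpha_i}$ equals $\sum_{k:i_k=i}a_k>0$ (higher-order commutators land only in non-simple positive root spaces). By Steinberg's criterion $g$ is regular unipotent; the argument for $U^-_{w,>0}$ is symmetric under the pinning involution swapping $B^+$ and $B^-$.

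For the converse, let $g\in G^{\reg,\text{uni}}\cap G_{\ge 0}$ and write $g=u_+ t u_-$. Fix an irreducible $V=V_\lambda$ with Lusztig's canonical basis $\CB$. Expanding $g$ on $\CB$ using the non-negativity of canonical-basis matrix coefficients for elements of $G_{\ge 0}$ yields, for $v\in\CB$ of weight $\mu$,
\[
 g_{v,v}\;=\;\mu(t)\;+\;\sum_{\substack{v''\in\CB\\ \mathrm{wt}(v'')<\mu}}(u_+)_{v,v''}\,\mathrm{wt}(v'')(t)\,(u_-)_{v'',v}\;\ge\;\mu(t),
\]
so $\chi_V(g)\ge\chi_V(t)$. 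By result (b), unipotency of $g$ forces $\chi_V(g)=\dim V$. Since $G$ is almost simple, $\sum_\mu(\dim V^\mu)\mu$ is $W$-invariant hence zero in $X^*(T)\otimes\BQ$, so weighted AM--GM gives $\chi_V(t)\ge\dim V$ with equality iff $\mu(t)$ is constant over the weights of $V$. Equality must hold throughout; applied to a faithful $V$ this forces $t=1$ and also $(u_+)_{v,v''}(u_-)_{v'',v}=0$ for every weight-ordered pair $v''<v$ in $\CB$ and every irreducible $V$.

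The main obstacle is to upgrade this pointwise dichotomy to the global statement that one of $u_\pm$ is trivial. I would argue that a nontrivial element of $U^\pm_{w,>0}$ has strictly positive canonical-basis matrix entries on a dense family of weight-ordered pairs (compatible with $w$) in a sufficiently large representation, by the total positivity of Lusztig's parametrization; if both $u_+$ and $u_-$ were nontrivial one could exhibit a pair where both matrix entries are strictly positive, contradicting the vanishing. Hence (up to the $B^+\leftrightarrow B^-$ symmetry) $g=u_+\in U^+_{w_1,>0}$. If finally $J:=\supp(w_1)\subsetneq I$, then $g$ lies in the maximal unipotent of the Levi $L_J$, and $\dim C_G(g)\ge \dim Z(L_J)^0+\rank L_J^{\mathrm{ss}}>\rank G$, contradicting regularity. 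Thus $\supp(w_1)=I$. The density and two-component assertions follow from the fact that the top cell $U^+_{w_0,>0}$ is connected, dense in $U^+$, and has all smaller full-support cells in its closure, while the $U^+$ and $U^-$ pieces are disjoint in $G^{\reg,\text{uni}}$ since $U^+\cap U^-=\{1\}$ is not regular.
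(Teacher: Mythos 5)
The two directions are treated asymmetrically in both your proof and the paper's, so let me take them in turn.

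The "easy" direction (elements of $U^\pm_{w,>0}$ with $\supp(w)=I$ are regular unipotent) is asserted without proof in the paper; your Baker--Campbell--Hausdorff argument that $\log(g)$ has strictly positive components on every simple root space, hence is regular nilpotent, is a legitimate way to see this and is fine.

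The converse direction is where the real content lies, and your route diverges sharply from the paper's. The paper simply cites \cite[Theorem~6.6]{Lu94}, which already says $G^{\text{uni}}\cap G_{\ge 0}=\bigsqcup_{\supp(w_1)\cap\supp(w_2)=\emptyset}U^+_{w_1,>0}U^-_{w_2,>0}$, and then extracts the regular elements by conjugating $U^+_{w_1,>0}U^-_{w_2,>0}$ by $\dot w_{J_2}$ (or $\dot w_{J_1}$) into the unipotent radical of a proper parabolic $P^+_{I-\{i\}}$. You instead try to rederive the unipotent structure from scratch via canonical-basis traces. Your derivation that $t=1$ and that $(u_+)_{v,v''}(u_-)_{v'',v}=0$ for every weight-ordered pair is essentially sound. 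However, the step you flag as "the main obstacle" is in fact a genuine gap, and the route you sketch to fill it cannot work. The vanishing condition you derived does \emph{not} force one of $u_\pm$ to be trivial; it only encodes that $u_+u_-$ is unipotent, and by \cite[Theorem~6.6]{Lu94} this is equivalent to $\supp(w_1)\cap\supp(w_2)=\emptyset$, which is much weaker. Concretely, in $SL_3$ take $u_+=x_1(a)$, $u_-=y_2(b)$ with $a,b>0$: then $u_+u_-$ is unipotent (it equals $1+N$ with $N^2=0$), so $\sum_v g_{v,v}=\dim V$ and the products $(u_+)_{v,v''}(u_-)_{v'',v}$ do all vanish for every irreducible $V$, yet neither factor is trivial. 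So the "pair with both entries strictly positive" that you would like to exhibit does not exist here, and your proposed argument would incorrectly conclude that $x_1(a)y_2(b)$ is not unipotent.

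What is missing, structurally, is that regularity must be brought in \emph{before} you can conclude that one of $w_1,w_2$ is trivial — not only afterward to show $\supp(w_1)=I$. The correct order of deductions is: unipotency gives $t=1$ and $\supp(w_1)\cap\supp(w_2)=\emptyset$; then regularity rules out the case where both $J_1,J_2$ are nonempty (as in the paper, by conjugating into $U_{P^+_{I-\{i\}}}$, or by a centralizer-dimension estimate for the class of $u_+u_-$); then regularity again forces the remaining nonempty $J_k$ to be all of $I$ (your final dimension count, though the line $\dim Z(L_J)^0+\rank L_J^{\mathrm{ss}}>\rank G$ actually evaluates to $=\rank G$, so you need a sharper estimate showing $C_G(g)\supsetneq C_{L_J}(g)$). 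As written, your argument skips the middle step, and the skip is not repairable by the mechanism you propose.
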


\begin{proof}
Let $G^{\text{uni}}$ be the set of unipotent elements in $G$. By \cite[Theorem 6.6]{Lu94}, $$G^{\text{uni}} \cap G_{\ge 0}=\bigsqcup_{w_1, w_2 \in W; \supp(w_1) \cap \supp(w_2)=\emptyset} U^+_{w_1, >0} U^-_{w_2, >0}.$$ 

Suppose that $G$ is almost simple. Let $w_1, w_2 \in W$ and $g \in U^+_{w_1, >0} U^-_{w_2, >0}$. Let $J_i=\supp(w_i)$ for $i=1, 2$. We assume furthermore that $J_1 \cap J_2=\emptyset$. Then $\dot w_{J_2} g \dot w_{J_2} \i \in U^+$. 
If $J_2\neq \emptyset$ and $J_2 \neq I$, then there exists $i \in I-J_2$ such that in the Dynkin diagram of $G$, the vertex corresponds $i$ is connected to some vertex in $J_2$. In this case, $w_{J_2}(\Phi_{J_1}) \cap \Phi_{\{i\}}=\emptyset$ and $\dot w_{J_2} g \dot w_{J_2} \i \in U_{P^+_{I-\{i\}}}$. Hence $g$ is not regular. 

Similarly, if $J_1 \neq \emptyset$ and $J_1 \neq I$, then $g$ is not regular. It is obvious that if $J_1=J_2=\emptyset$, then $g=1$ and hence is not regular. Finally if $(J_1, J_2)=(I, \emptyset)$ or $(\emptyset, I)$, then $g$ is regular. 

The ``in particular'' part follows from that the fact that $\sqcup_{w \in W, \supp(w)=I} U^\pm_{w, >0}$ is connected and dense in $U^\pm$. 
\end{proof}

\subsection{Uniqueness of weak pinnings}
Let $G_{\ge 0}$ be the totally nonnegative part of $G$ associated to pinnings $\mathbf P$ and $\mathbf P'=(B'{}^+, B'{}^-, T', x'_i, y'_i; i \in I')$. We show that $\mathbf P$ and $\mathbf P'$ are equivalent. 

Let $G_{\text{der}}$ be the derived subgroup of $G$. Let $G_k$ for $1 \le k \le l$ be the almost simple factors of $G_{\text{der}}$. 
 
For any $k$, $G_k \cap G_{\ge 0}$ is a totally nonnegative part of $G_k$ associated to the pinnings $(B^+ \cap G_k, B^- \cap G_k, T \cap G_k, \ldots)$, $(B'{}^+ \cap G_k, B'{}^- \cap G_k, T' \cap G_k, \ldots)$ of $G_k$. By Lemma \ref{sec:reg-uni},  $G_k \cap G'_{\ge 0}$ has two connected components and the closure of these connected components are the unipotent radicals of two opposite Borel subgroups $\{B^+ \cap G_k, B^- \cap G_k\}=\{B'{}^+ \cap G_k, B'{}^- \cap G_k\}$. In particular, $$T \cap G_k=(B^+ \cap G_k) \cap (B^- \cap G_k)=(B'{}^+ \cap G_k) \cap (B'{}^+ \cap G_k)=T' \cap G_k.$$ Hence $T=(T \cap G_1) \cdots (T \cap G_l) Z(G)=(T' \cap G_1) \cdots (T' \cap G_l) Z(G)=T'$. 
Recall that $\Phi$ is the set of roots of $G$ with respect to $T$. Then $\Phi=\Phi_1 \sqcup \cdots \sqcup \Phi_l$, where $\Phi_k$ is the set of roots of $G_k$ with respect to $T \cap G_k$. Let $\Sigma$ be the set of positive roots determined by $B^+$ and $\Sigma'$ be the set of positive roots determined by $B'{}^+$. Since $\{B^+ \cap G_k, B^- \cap G_k\}=\{B'{}^+ \cap G_k, B'{}^- \cap G_k\}$, we have $\Sigma' \cap \Phi_k=\Sigma \cap \Phi_k$ or $-\Sigma \cap \Phi_k$. In particular, we may identify $I$ with $I'$. If $B'{}^+ \cap G_k=B^+ \cap G_k$, then for any $i \in I$ with $\a_i \in \Phi_K$, we have $x'_i(\BR_{>0})=x_i(\BR_{>0})$ and thus there exists $c_i>0$ such that $x'_i(a)=x_i(c_i a)$ for all $a \in \BC$. Similarly, if  $B'{}^+ \cap G_k=B^- \cap G_k$, then for any $i \in I$ with $\a_i \in \Phi_K$, we have $x'_i(\BR_{>0})=y_i(\BR_{>0})$ and thus there exists $c_i>0$ such that $x'_i(a)=y_i(c_i a)$ for all $a \in \BC$. 

Thus $\mathbf P'$ and $\mathbf P$ are equivalent. 

\subsection{Weak bases}\label{sec:weakb}
Let $V$ be a $\BC$-vector space. A {\it half line} in $V$ is a subset of $V$ of the form $\{av;a\in\BR_{\ge0}\}$ for some $v\in V-\{0\}$. Assume now that $\dim(V)<\infty$. A {\it weak basis} of $V$ is a collection of half lines in $V$ such that if we choose a nonzero vector on each of these half lines, the resulting nonzero vectors form a basis of $V$. Note that any basis of $V$ gives rise to a weak basis of $V$, which consists of the half lines containing the various elements in that basis.

Let $\underline\b$ be a weak basis of $V$. Let $V_{\underline\b,\ge0}$ be the set of $\BR_{\ge0}$-linear combination of vectors in the half lines in $\b$. This subset is closed under addition and under scalar multiplication by $\BR_{\ge0}$. Let $V_{\underline\b,>0}$ be the set of $\BR_{>0}$-linear combination of nonzero vectors in the half lines in $\b$. This subset of $V$ is closed under addition and under scalar multiplication by $\BR_{>0}$. If $\underline\b,\underline\b'$ are two weak bases of $V$, we have $$V_{\underline\b,\ge0}=V_{\underline\b',\ge0}\Rightarrow V_{\underline\b,>0}=V_{\underline\b',>0}\Rightarrow \underline\b=\underline\b'.$$

We now assume that $G$ is simply laced.

We fix a weak pinning $\underline{\mathbf P}$ of $G$. Let $X_{\underline{\mathbf P}}$ be the set of Borel subgroups $B$ of $G$ such that $B=B^+$ for some pinning $\mathbf P=(B^+,B^-,\ldots)$ in $\underline{\mathbf P}$. 

Let $\Irr(G)$ be the collection of irreducible finite dimensional rational representations of $G$. Let $V\in\Irr G$. For any Borel subgroup $B$ of $G$ we denote by $V_B$ the unique $B$-invariant $\BC$-line in $V$. 

For $B\in X_{\underline{\mathbf P}}$ let $e_B$ be the set of half lines in $V_B$. For any pinning $\mathbf P=(B^+,B^-,\ldots)$ in $\underline{\mathbf P}$ and any $v\in V_{B^+}-\{0\}$ we denote by $\BB_{V,\mathbf P,v}$ the canonical basis of $V$ containing $v$ defined in terms of $\mathbf P$ in \cite{Lu90}.

If $\mathbf P'=(B'{}^+,B'{}^-,\ldots)$ is another pinning in $\underline{\mathbf P}$ and if $v' \in V_{B'{}^+}-\{0\}$ is defined by the condition that $v'\in\BB_{V,\mathbf P,v}$ then, using \cite[Proposition 21.1.2]{Lu93}, we see that the canonical basis $\BB_{V,\mathbf P',v'}$ of $V$ gives rise to the same weak basis of $V$ as $\BB_{V,\mathbf P,v}$; moreover, the half line in $V_{B'{}^+}$ containing $v'$ depends only on the half line containing $v$. Thus the sets $e_B$ for various $B\in X_{\underline{\mathbf P}}$ can be identified with a single set $e_{V,\underline{\mathbf P}}$. We also see that $\underline{\mathbf P}$ together with a choice of an element $\e\in e_{V,\underline{\mathbf P}}$ give rise to a weak basis $\b_{V,\underline{\mathbf P},\e}$ of $V$.

Hence the subsets $V_{\b_{V,\underline{\mathbf P}, \e},\ge0}$, $V_{\b_{V,\underline{\mathbf P}, \e},>0}$ are defined. 

According to \cite[\S 3.2]{Lu94}, if $g\in G_{\underline{\mathbf P}, >0}$ then $g:V\to V$ maps $V_{\b_{V,\underline{\mathbf P}, \e},\ge0}$ into $V_{\b_{V,\underline{\mathbf P}, \e},>0}$. Conversely, according to \cite[Theorem 1.11]{FZ}, if $g\in G$ is such that for any $V\in\text{Irr}(G)$, $g:V \to V$ maps $V_{\b_{V,\underline{\mathbf P},\e},\ge0}$ into $V_{\b_{V,\underline{\mathbf P}, \e},>0}$ (for some $\e$), then $g\in G_{>0}$.

\section{Regular elements}

\subsection{The main results}
Recall that $G^{\reg}$ is the set of regular elements in $G$ and $G^{\reg, \text{ss}}$ is the set of regular semisimple elements in $G$. Now we state the main results of this section. 

\begin{theorem}\label{main1}
Let $w_1, w_2 \in W$. Then 

(1) $G_{w_1, w_2, >0} \cap G^{\reg, \text{ss}} \neq \emptyset$. 

(2) $G_{w_1, w_2, >0} \subset G^{\reg, \text{ss}}$ if and only if $\supp(w_1)=\supp(w_2)=I$.  

(3) Suppose that $G$ is almost simple. Then $G_{w_1, w_2, >0} \subset G^{\reg}$ if and only if $\supp(w_1)=I$ or $\supp(w_2)=I$.  
\end{theorem}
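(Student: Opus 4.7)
The plan breaks into the three parts of the theorem.

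For part~(1), the cell $G_{w_1,w_2,>0}$ is the image of
\[\phi\colon U^+_{w_1,>0}\times T_{>0}\times U^-_{w_2,>0}\to G,\qquad (u_+,t,u_-)\mapsto u_+tu_-,\]
the real positive restriction of an algebraic morphism $\phi_{\BC}$ whose image has Zariski closure $X:=\overline{U^+_{w_1,\BC}\cdot T\cdot U^-_{w_2,\BC}}$. Since the parameter domain has real dimension $\ell(w_1)+r+\ell(w_2)=\dim_{\BC}X$ and is open in its complexification, the cell is Zariski dense in $X$. Because $X\supset T$ (take $u_\pm=1$) and $T\cap G^{\reg,\text{ss}}$ is dense in $T$, the Zariski-open subset $X\cap G^{\reg,\text{ss}}$ is nonempty and must meet the dense cell.

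For part~(2), direction $\Leftarrow$ proceeds by showing $g^k\in G_{>0}$ for $k\gg0$ and descending via the Jordan decomposition. Multiplication transforms cells by the Demazure product, $G_{w_1,w_2,>0}\cdot G_{w_1',w_2',>0}\subset G_{w_1*w_1',\,w_2*w_2',>0}$, and $\supp(w*w')=\supp(w)\cup\supp(w')$. If $\supp(w_1)=\supp(w_2)=I$, the Demazure powers $w_i^{*k}$ retain support $I$ and have length bounded by $\ell(w_0)$, stabilizing at $w_0$ (the unique element with full support and maximal length); hence $g^k\in G_{w_0,w_0,>0}=G_{>0}$ for $k$ large. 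By \cite[(a)]{Lu94}, $g^k$ is regular semisimple; writing $g=g_sg_u$, semisimplicity of $g^k=g_s^kg_u^k$ forces $g_u^k=1$ and hence $g_u=1$, so $g$ is semisimple. Moreover $Z_G(g)\subseteq Z_G(g^k)$, which is a maximal torus, so $g$ is regular semisimple. For direction $\Rightarrow$, assume WLOG $\supp(w_1)\subsetneq I$. If $w_1=w_2=1$ the cell $T_{>0}$ contains the identity, which is non-regular; otherwise pick $i\in I\setminus\supp(w_1)$ and tune $t\in T_{>0}$ along $\alpha_i^\vee$ so that two eigenvalues of $g$ on a faithful representation collide, producing either a nontrivial Jordan block or a repeated eigenvalue whose centralizer exceeds $r$ in dimension.

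For part~(3), direction $\Leftarrow$ assumes $G$ almost simple and $\supp(w_1)=I$. By Lemma~\ref{sec:reg-uni}, $u_+\in U^+_{w_1,>0}$ is regular unipotent in $G$; hence $\dim Z_G(u_+)=r$ and $Z_G(u_+)^\circ\subset U^+$. To conclude $\dim Z_G(g)=r$ for $g=u_+tu_-$, I would analyze the centralizer via Bruhat/Gauss decomposition: the relation $h(u_+tu_-)=(u_+tu_-)h$ for $h\in Z_G(g)$, combined with the rigidity forced by regular unipotency of $u_+$, constrains $h$ to an $r$-dimensional subgroup. Direction $\Rightarrow$: assume $J_i:=\supp(w_i)\subsetneq I$ for both $i$. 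If $J_1\cup J_2\ne I$, take $t\in Z(L_{J_1\cup J_2})^\circ\cap T_{>0}$ at a special value so that $g=t\cdot(u_+u_-)$ (using centrality of $t$) has two eigenvalues coinciding with full geometric multiplicity, yielding a semisimple non-regular element. If $J_1\cup J_2=I$, the characteristic polynomial of $g$ on a faithful representation factors so that some eigenvalues depend only on $t$ (e.g., in $G=SL_3$ with $w_1=s_1,w_2=s_2$, the eigenvalues are $t_1,t_2,t_3$); tune $t$ to repeat such an eigenvalue with matching geometric multiplicity.

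The main obstacle is direction $\Leftarrow$ of (3): showing that \emph{every} element of a cell with $\supp(w_1)=I$ is regular, based only on $u_+$ being regular unipotent. Unlike (2)$\Leftarrow$, which reduces cleanly to $G_{>0}$ via Demazure powers, this direction requires a direct analysis of the centralizer of $u_+\cdot(tu_-)$ crossing the Bruhat factorization, handling uniformly the interaction of a regular unipotent with an arbitrary element of $B^-$.
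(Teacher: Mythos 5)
Your part (1) is correct and uses a genuinely different route: a Zariski-density argument (the cell is Zariski dense in the irreducible variety $X=\overline{U^+_{w_1,\BC}TU^-_{w_2,\BC}}$, which contains $T$ and hence meets the nonempty open set $G^{\reg,\text{ss}}$), whereas the paper builds an explicit regular semisimple representative by conjugating $u_1u_2t$, via $u''\in U^-_{w_K,>0}$ and $g=\dot w_{J_2}\dot w_K$, into $U^+(gt_1tg\i)$ with $t\in T'$ chosen to make $gt_1tg\i$ regular. Your density argument is shorter, but it does not set up the conjugation machinery that the paper then reuses for the ``only if'' directions. Your (2)$\Leftarrow$ via Demazure powers is sound (though $w_0$ is singled out among idempotents $w_J$ of the Demazure monoid by having full support, not as ``the unique element with full support and maximal length''); the paper simply cites \cite[Corollary 5.7]{Lu94}.

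The other three directions are not proved. For (2)$\Rightarrow$ you propose ``tuning $t$ so eigenvalues collide, producing either a nontrivial Jordan block or a repeated eigenvalue whose centralizer exceeds $r$''; that is the conclusion, not an argument --- nothing in the sketch controls the Jordan type or the geometric multiplicity at the collision, and a repeated eigenvalue on one representation does not by itself rule out regular semisimplicity. The paper conjugates $u_1u_2t$ into $U^+(gt_1tg\i)$ and chooses $t\in T'$ with $\a_j(gt_1tg\i)=1$ for some $j\notin J_2$; then the semisimple part of anything in $U^+(gt_1tg\i)$ is $U^+$-conjugate to the non-regular element $gt_1tg\i$, so $u_1u_2t\notin G^{\reg,\text{ss}}$ whether or not it is semisimple. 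For (3)$\Rightarrow$ the same objection applies; the paper instead uses Lemma~\ref{lem:root} to find $j$ with $w_Kw_{J_2}(\a_j)\notin\Phi_{J_1}$, places the conjugate in $U_{P^+_{I-\{j\}}}(gt_1tg\i)$ with $\a_j(gt_1tg\i)=1$, and bounds the conjugacy class dimension by $\dim G-\rank G-2$ via a $P^+_{I-\{j\}}$-orbit count --- a quantitative step absent from your sketch. The most serious gap is (3)$\Leftarrow$, which you flag yourself: analysing $h(u_+tu_-)=(u_+tu_-)h$ ``using the rigidity of regular unipotency of $u_+$'' does not go through, because the centralizer of $u_+tu_-$ has no useful comparison with that of $u_+$. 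The paper instead projects $\pi^+_J(g)$ (with $J=\supp(w_2)$) to an oscillatory element of $L_J$, applies Lemma~\ref{lem:w-to-1} to conjugate $g$ by $u\in U^-_{w_J,>0}$ into $G_{w_1,1,>0}\subset B^+$, then applies Lemma~\ref{lem:conj} to reach $tU^+_{\pi_{J'}(w_1),>0}$ with $\a_i(t)\ge1$, so that $J'=I(t)$ gives $Z_G(t)^\circ=L_{J'}$ and $\supp(\pi_{J'}(w_1))=J'$; the unipotent factor is then regular unipotent in $L_{J'}$ and the Jordan decomposition inside $L_{J'}$ finishes. Without the two conjugation lemmas your outline has no mechanism to reduce $u_+tu_-$ to this normal form.
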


\subsection{Some semigroups} Let $\ast: W \times W \to W$ is the monoid product (see \cite[\S 2.11]{Lu19}. Then for any $w, w' \in W$, we have $\supp(w \ast w')=\supp(w) \cup \supp(w')$. By \cite[\S 2.11(d)]{Lu19}, we have $G_{w_1, w_2, >0} G_{w_3, w_4, >0}=G_{w_1 \ast w_3, w_2 \ast w_4, >0}$. Now as a consequence of Theorem \ref{main1}, we have 

(a) {\it the union of the totally nonnegative cells consisting of regular semisimple elements is an open sub semigroup of $G_{\ge 0}$;} 

(b) {\it the union of the totally nonnegative cells consisting of regular elements is an open sub semigroup of $G_{\ge 0}$.}

\subsection{Oscillatory elements} We call an element $g \in G_{\ge 0}$ {\it oscillatory} if $g^m \in G_{>0}$ for some $m \in \BN$. By \cite[Proposition 2.19]{Lu94}, the subset of oscillatory elements is $G^{I, I}_{\ge 0}$. By \cite[Corollary 5.7]{Lu94}, any oscillatory element is regular and semisimple. This is the ``if'' part of Theorem \ref{main1} (2).

We have the following result, which generalizes \cite[Corollary 8.10]{Lu94} from totally positive elements to oscillatory elements. 

\begin{lemma}\label{lem:w-to-1}
Let $g$ be an oscillatory element. Then there exists a unique element $u \in U^-_{>0}$ such that $u \i g u \in B^+$. Moreover, $g=u u' t u \i$ for some $u' \in U^+_{\ge 0}$ and $t \in T$ with $\a_i(t)>1$ for all $i \in I$. 
\end{lemma}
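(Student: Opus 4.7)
The plan is to reduce to the totally positive case, which is \cite[Corollary 8.10]{Lu94}, and then extend by a density argument.

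First, $g$ is regular semisimple by \cite[Corollary 5.7]{Lu94}, so its centralizer is the unique maximal torus $T_g$ containing $g$. Choose $m$ with $g^m\in G_{>0}$. Then $g^m\in T_g$ is also regular semisimple, and consequently $g$ and $g^m$ are fixed by the same set of Borel subgroups of $G$. Applying \cite[Corollary 8.10]{Lu94} to $g^m$ produces a unique $u\in U^-_{>0}$ with $u^{-1}g^m u\in B^+$; the Borel $uB^+u^{-1}$ contains $g^m$, hence also $g$, giving $u^{-1}gu\in B^+$. Uniqueness of $u$ for $g$ follows: any other $u_1\in U^-_{>0}$ with $u_1^{-1}gu_1\in B^+$ would satisfy $u_1^{-1}g^m u_1\in B^+$, forcing $u_1=u$ by uniqueness in the positive case.

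For the moreover statement, write $u^{-1}gu=u't$ with $u'\in U^+$ and $t\in T$. Raising to the $m$-th power and extracting semisimple parts shows that $t^m$ is the semisimple part of $u^{-1}g^m u$, so by the positive-case decomposition of $g^m$ we have $\a_i(t^m)>1$ for all $i$. Since $g\in G_{\ge 0}$, the eigenvalues of $g$ on any finite-dimensional representation lie in $\BR_{>0}$ by \cite{Lu19} (fact (b) of the introduction); hence $\chi(t)=\chi(utu^{-1})\in\BR_{>0}$ for every character $\chi$ of $T$, so in particular $\a_i(t)>0$, and then $\a_i(t)^m>1$ forces $\a_i(t)>1$.

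The main obstacle is showing $u'\in U^+_{\ge 0}$, which I would handle by a density argument. Since $\overline{G_{>0}}=G_{\ge 0}$, choose a sequence $g_n\in G_{>0}$ with $g_n\to g$. Applying \cite[Corollary 8.10]{Lu94} to each $g_n$ yields $g_n=u_n u'_n t_n u_n^{-1}$ with $u_n\in U^-_{>0}$ and $u'_n\in U^+_{>0}$. The Borel $u_nB^+u_n^{-1}$ is the unique $g_n$-invariant Borel in the opposite big cell $\{vB^+v^{-1}:v\in U^-\}$; being in this open cell is an open condition, so $u_nB^+u_n^{-1}$ converges to the unique $g$-invariant Borel in this cell, which is $uB^+u^{-1}$. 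Hence $u_n\to u$, and the continuous $U^+$-$T$ factorization in $B^+$ then gives $u'_n\to u'$. Since $U^+_{\ge 0}=\overline{U^+_{>0}}$, we conclude $u'\in U^+_{\ge 0}$.
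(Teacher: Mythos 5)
The first half of your argument (existence and uniqueness of $u$) is correct, and takes a mildly different but equally valid route from the paper: you transfer the problem to $g^m$ and back via the shared centralizer torus, whereas the paper works directly with the totally nonnegative flag manifold $\CB_{\ge 0}$ (using \cite[Cor.\ 8.11, Lem.\ 8.2, Lem.\ 8.5]{Lu94} to first place $g$ in some $B\in\CB_{\ge 0}$, then identify $B$ with the unique $g^m$-fixed Borel in $\CB_{\ge 0}$, which lies in $\CB_{>0}$). Your derivation of $\a_i(t)>1$ is also fine and matches the paper's in spirit.

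The genuine gap is in the density argument for $u'\in U^+_{\ge 0}$, specifically the claim that $u_nB^+u_n^{-1}$ is ``the unique $g_n$-invariant Borel in the opposite big cell $\{vB^+v^{-1}:v\in U^-\}$.'' This is false. A regular semisimple $g_n$ is contained in exactly $|W|$ Borel subgroups, and since the big cell is open and dense in $\CB$, typically \emph{all} of them lie in the big cell. Already in $G=\SL_2$ with $g_n=u_n u_n' t_n u_n^{-1}$, $u_n=y_1(a)$, $u_n'=x_1(b)$, $t_n=\alpha_1^\vee(c)$ with $a,b>0$, $c>1$, both $g_n$-fixed lines in $\BP^1$ have nonzero first coordinate, so both fixed Borels are in the big cell. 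The correct uniqueness statement (which is what \cite[Lem.\ 8.2]{Lu94} gives and what the paper invokes) is uniqueness in the \emph{compact} set $\CB_{\ge 0}$, not in the big cell. With that and a subsequential-limit argument via compactness of $\CB_{\ge 0}$ one could repair your step, but as written the logic does not go through. The paper's own treatment of $u'\in U^+_{\ge0}$ is considerably more direct and avoids any limiting argument: since $u\in U^-_{>0}\subset G_{\ge0}$ and $G_{\ge0}$ is a monoid, $gu=uu't\in G_{\ge0}$; the uniqueness of the $U^-TU^+$ decomposition then forces $u'\in U^+_{\ge0}$ and $t\in T_{>0}$. You may want to adopt that shortcut.
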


\begin{proof}
Let $\CB=G/B^+$ be the flag variety of $G$. We denote the action of $G$ on $\CB$ by $g \cdot B:=g B g \i$. Let $\CB_{>0}=U^-_{>0} \cdot B^+ \subset \CB$ be the totally positive flag manifold and $\CB_{\ge 0}=\overline{\CB_{>0}}$ be the totally nonnegative flag manifold. By \cite[Corollary 8.11]{Lu94}, there exists $B \in \CB_{\ge 0}$ with $g \in B$. We have $g^m \in G_{>0}$ for $m \in \BN$. By \cite[Lemma 8.2]{Lu94}, there exists a unique $B' \in \CB_{\ge 0}$ with $g^n \in B'$. Since $g \in B$, we have $g^n \in B$. Thus $B=B'$. By \cite[Lemma 8.5]{Lu94}, $B' \in \CB_{>0}$. Hence $B=u B^+ u \i$ for some $u \in U^-_{>0}$. So $u \i g u \in B^+$. The uniqueness of $u$ follows from the uniqueness of $B$. 

Let $u \i g u=u' t$ with $u' \in U^+$ and $t \in T$. We then have $u u' t=g u \in G_{\ge 0}$. Therefore we have $u' \in U^+_{\ge 0}$ and $t \in T_{>0}$. Moreover, $g^m=u (u' t)^m u \i=u u'_m t^m u \i$ for some $u'_m \in U^+_{>0}$. By \cite[Corollary 8.10]{Lu94}, $\a_i(t^m)>1$ for all $i \in I$. Hence $\a_i(t)>1$ for all $i \in I$. 
\end{proof}

\subsection{Projection maps} 
For $J \subset I$, let $\pi^\pm_J: P^\pm_J \to L_J$ be the projection map. Then the restriction of $\pi^\pm_J$ to $T_{>0}$ is the identity map. Moreover, $\pi^+_J(y_i(a))=y_i(a)$ and $\pi^-_J(x_i(a))=x_i(a)$ for $i \in J$. We also have
\[
\pi^+_J(x_i(a))=\begin{cases} x_i(a), & \text{ if } i \in J \\ 1, & \text{ if } i \notin J \end{cases} \text{ and } \pi^-_J(y_i(a))=\begin{cases} y_i(a), & \text{ if } i \in J; \\ 1, & \text{ if } i \notin J. \end{cases}
\]

In particular, $\pi^\pm_J(P^\pm_J \cap G_{\ge 0})=L_{J, \ge 0}$. 
We define a morphism of monoids $\pi_J: W \to W_J$ by $s_i \mapsto s_i$ if $i \in J$ and $s_i \mapsto 1$ if $i \notin J$. Although not needed in this paper, it is worth pointing out that the map $W \to W_J$ can be regarded as the map $\pi^\pm_J: U^\pm(K) \to (U^\pm \cap L_J)(K)$ in the case where $K$ is the semifield of one element.

\subsection{Proof of Theorem \ref{main1} (1) and (2)}\label{only-if-1}
Let $J_i=\supp(w_i)$ for $i=1, 2$. Let $u_1 \in U^+_{w_1, >0}, u_2 \in U^-_{w_2, >0}$.

Set $K=J_1 \cap J_2$. Let $v_i=\pi_K(w_i)$. By our assumption, $\supp(v_1)=\supp(v_2)=K$. Let $u'_1=\pi^+_K(u_1)$ and $u'_2=\pi^-_K(u_2)$. Then $u'_1 \in U^+_{v_1, >0}$ and $u'_2 \in U^-_{v_2, >0}$. By Lemma \ref{lem:w-to-1}, there exists $u'' \in U^-_{w_K, >0}$ and $t_1 \in T_{>0}$ such that $(u'') \i u'_1 u'_2 u'' \in (U^+ \cap L_K) t_1$ and $\a_i(t_1)>1$ for all $i \in K$. Since $u_1 u_2 \in (U_{P^+_K} \cap L_{J_1}) u'_1 u'_2 (U_{P^-_K} \cap L_{J_2})$, we have 
\begin{align*} 
	(u'') \i u_1 u_2 u'' & \in (U_{P^+_K} \cap L_{J_1}) (u'') \i u'_1 u'_2 u'' (U_{P^-_K} \cap L_{J_2}) \\ & \subset (U_{P^+_K} \cap L_{J_1}) (U^+ \cap L_K) t (U_{P^-_K} \cap L_{J_2}) \\ 
	            &=(U^+ \cap L_{J_1}) t_1 (U_{P^-_K} \cap L_{J_2}).
\end{align*}

Set $g=\dot w_{J_2} \dot w_K$. We have $g (U_{P^-_K} \cap L_{J_2}) g \i \in U^+ \cap L_{J_2}$. Note that $w_{J_2} w_K \in W^{J_1}$. Then $g (U^+ \cap L_{J_1}) g \i \in U^+$. So $g (u'') \i u_1 u_2 u'' g \i \in U^+ (g t_1 g \i)$. 

Let $T'=\{t \in T_{>0}; \a_i(t)=1 \text{ for all } i \in K\}$. Then $g (u'') \i u_1 u_2 t u'' g \i \in U^+ (g t_1 t g \i)$ for any $t \in T'$. 

Note that there exists $t' \in T'$ such that $\a_i(t_1 t)>1$ for all $i \notin K$. Moreover, $\a_i(t_1 t)=\a_i(t_1)>1$ for $i \in K$. Thus $t_1 t$ is a regular semisimple element in $T$. So any element in $U^+ (g t_1 t g \i)$ is conjugate by an element in $U^+$ to $g t_1 t g\i$, which is regular semisimple. In particular, $u_1 u_2 t$ is regular semisimple. This proves part (1) of Theorem \ref{main1}. 

Now we come to the part (2) of Theorem \ref{main1}. The ``if'' part is proved in \cite{Lu94}. Now we prove the ``only if'' part. 

Without loss of generality, we assume that $J_2 \neq I$. Let $j \notin J_2$. There exists $t \in T'$ such that $\a_j(g t_1 t g \i)=1$. The element $u_1 u_2 t \in G_{w_1, w_2, >0}$ is conjugate to an element in $U^+ (g t_1 t g \i)$. Note that the semisimple part of any element in $U^+ (g t_1 t g \i)$ is conjugate by $U^+$ to $g t_1 t g \i$. In particular, if $u_1 u_2 t$ is semisimple, then it is conjugate to $g t_1 t g \i$. Since $\a_j(g t_1 t g \i)=1$, $g t_1 t g \i$ is not regular. Thus $u_1 u_2 t \notin G^{\reg, \text{ss}}$. 
\qed

\smallskip

\subsection{A technical lemma on the root system}
Before proving the ``only if'' part of Theorem \ref{main1}(3), we establish a lemma on the root system. 

\begin{lemma}\label{lem:root}
Suppose that $G$ is almost simple and $J, J'$ are proper subsets of $I$. Then there exists $j \notin J'$ such that $w_{J'}(\a_j) \notin \Phi_J$. 
\end{lemma}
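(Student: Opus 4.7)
The approach is to distinguish two cases according to whether $J \cup J' = I$, the first being immediate and the second resting on a support-and-dominance property of $w_{J'}$.

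If $J \cup J' \neq I$, I would pick any $j \in I \setminus (J \cup J')$. Since $w_{J'} \in W_{J'}$ only adjusts coefficients of $\alpha_k$ with $k \in J'$, the expansion $w_{J'}(\alpha_j) = \alpha_j + \sum_{k \in J'} c_k \alpha_k$ shows $j$ lies in its support; as $j \notin J$, this root is not in $\Phi_J$.

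Suppose now $J \cup J' = I$. Since $J \neq I$ we have $J' \setminus J \neq \emptyset$, so some connected component $C_0$ of the Dynkin subdiagram on $J'$ meets $J' \setminus J$. Almost-simplicity of $G$ makes the full Dynkin diagram connected, and since $C_0 \subsetneq I$ some vertex $j$ outside $C_0$ is adjacent to $C_0$; the maximality of $C_0$ as a component of $J'$ rules out $j \in J' \setminus C_0$, forcing $j \in I \setminus J' = J \setminus J'$. The proof then reduces to the positivity claim: for this $j$, every coefficient $c_k$ with $k \in C_0$ in $w_{J'}(\alpha_j) = \alpha_j + \sum_{k \in J'} c_k \alpha_k$ is strictly positive. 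Granting this, any vertex in $C_0 \cap (J' \setminus J)$ certifies that $w_{J'}(\alpha_j) \notin \Phi_J$.

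To establish the positivity claim, decompose $J' = C_0 \sqcup C_1 \sqcup \cdots \sqcup C_r$ into connected components; vertices in different $C_i$ are non-adjacent in the full Dynkin diagram (any such edge would lie within $J'$ and merge the two components), so the $w_{C_i}$ commute and each $w_{C_i}$ with $i \geq 1$ fixes every $\alpha_k$ with $k \in C_0$. Hence the $C_0$-coefficients of $w_{J'}(\alpha_j)$ coincide with those of $w_{C_0}(\alpha_j)$, reducing to the case $J' = C_0$ connected. There, $\langle \alpha_j, \alpha_k^\vee \rangle \le 0$ for all $k \in C_0$ shows $\alpha_j$ is $W_{C_0}$-antidominant, whence $w_{C_0}(\alpha_j)$ is $W_{C_0}$-dominant. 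Writing $w_{C_0}(\alpha_j) = \alpha_j + \sum_{k \in C_0} c_k \alpha_k$ with $c_k \ge 0$, suppose some coefficient vanishes and let $S$ denote the support. Since $C_0 \cup \{j\}$ is connected in the full Dynkin diagram and $S \subsetneq C_0 \cup \{j\}$ contains $j$, some $k_0 \in C_0 \setminus S$ is adjacent to a vertex of $S$; pairing $w_{C_0}(\alpha_j)$ with $\alpha_{k_0}^\vee$ then gives a sum of non-positive terms containing at least one strictly negative summand, contradicting dominance. The main obstacle is precisely this last support-and-dominance step; the surrounding combinatorial reductions are routine.
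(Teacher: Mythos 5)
Your proof is correct, and in the substantive case it reaches the same endpoint as the paper --- exhibiting some $j_1\in J'\setminus J$ whose coefficient in $w_{J'}(\a_j)$ is strictly positive --- by a genuinely different mechanism. The case split differs: the paper separates $J'\subset J$ (take any $j\notin J$) from $J'\not\subset J$, while you separate $J\cup J'\neq I$ (take $j\notin J\cup J'$) from $J\cup J'=I$; both easy cases just note the $\a_j$-coefficient is $1$. In the hard case the paper fixes $j_1\in J'\setminus J$, its connected component $C\subset J'$, and $j\notin J'$ adjacent to $C$, then runs a compact fundamental-weight computation: $w_{J'}(\o_{j_1})=\o_{j_1}-(\o_{J',j_1}+\o_{J',j_2})$, where $-w_{J'}$ sends $j_1\mapsto j_2\in C$ and the parenthesized term lies in $\sum_{i\in C}\BQ_{>0}\a_i$, so pairing with $\a_j$ (and using $\<\a_j,\a_{j'}\><0$) gives $\<w_{J'}(\a_j),\o_{j_1}\>>0$. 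You instead factor $w_{J'}$ over the connected components of $J'$ to isolate $w_{C_0}(\a_j)$, observe it is $W_{C_0}$-dominant because $\a_j$ is $W_{C_0}$-antidominant, and close with a support-connectivity argument forcing strict positivity of every $C_0$-coefficient. Your route is somewhat longer but more elementary (no fundamental weights, no $-w_{J'}$ diagram involution) and yields the slightly stronger conclusion that all $C_0$-coefficients are positive; the paper's argument is shorter and avoids the explicit reduction to a single connected component.
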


\begin{proof}
If $J' \subset J$, then we may take $j$ to be any vertex not in $J$. 

Now assume that $J' \not \subset J$. Set $K=J \cap J'$. Then $J'-K \neq \emptyset$. We consider $J$ as a sub-diagram of the Coxeter diagram of $I$. Let $C$ be a connected component of $J'$ that contains some vertex $j_1$ in $J'-K$. Since $J' \subsetneq I$, there exists $j \notin J'$ such that $j$ is connected to a vertex $j'$ in $C$. 

Let $\o_{j_1}$ be the fundamental weight of $j_1$ with respect to $\Phi$ and $\o_{J', j_1}$ be the fundamental weight of $j_1$ with respect to $\Phi_{J'}$. Then $\o_{j_1} \in \sum_{i \in I} \BQ_{>0} \a_i$ and $\o_{J', j_1} \in \sum_{i \in C} \BQ_{>0} \a_i$. Note that $w_{J'}(\o_{J', j_1})=-\o_{J', j_2}=\o_{J', j_1}-(\o_{J', j_1}+\o_{J', j_2})$ for some $j_2 \in C$. We have $\o_{J', j_2} \in \sum_{i \in C} \BQ_{>0} \a_i$ and hence $\o_{J', j_1}+\o_{J', j_2} \in \sum_{i \in C} \BQ_{>0} \a_i$. Since the restriction of $\o_{j_1}$ to the root system $\Phi_{J'}$ equals to $\o_{J', j_1}$, we have $$w_{J'}(\o_{j_1})=\o_{j_1}-(\o_{J', j_1}+\o_{J', j_2}) \in \o_{j_1}-\sum_{i \in C} \BQ_{>0} \a_i.$$

Now $\<w_{J'}(\a_j), \o_{j_1}\>=\<\a_j, w_{J'}(\o_{j_1})\> \in \<\a_j,  \o_{j_1}-\sum_{i \in C} \BQ_{>0} \a_i\>=-\<\a_j, \sum_{i \in C} \BQ_{>0} \a_i\>$. Since $\<\a_j, \a_i\> \le 0$ for any $i \in C$ and $\<\a_j, \a_{j'}\><0$, we have $\<w_{J'}(\a_j), \o_{j_1}\>>0$. Since $j_1 \notin J$, we have $w_{J'}(\a_j) \notin \Phi_J$. 
\end{proof}

\subsection{Proof of the ``only if'' part of Theorem \ref{main1} (3)}
Let $J_i=\supp(w_i)$ for $i=1, 2$. Set $K=J_1 \cap J_2$. Let $T'=\{t \in T_{>0}; \a_i(t)=1 \text{ for all } i \in K\}.$ Let $u_1 \in U^+_{w_1, >0}, u_2 \in U^-_{w_2, >0}$. We show that there exists $t \in T'$ such that $u_1 u_2 t$ is not regular. Our strategy is similar to \S\ref{only-if-1} but the choice of $t$ is more involved. 

By \S\ref{only-if-1}, there exists $u'' \in U^-_{w_K, >0}$ and $t_1 \in T_{>0}$ such that $$(u'') \i u_1 u_2 u'' \in (U^+ \cap L_{J_1}) t_1 (U_{P^-_K} \cap L_{J_2}).$$ By Lemma \ref{lem:root}, there exists $j \notin J_2$ such that $w_K w_{J_2}(\a_j) \notin \Phi_{J_1}$. Set $g=\dot w_{J_2} \dot w_K$. We have $g (U_{P^-_K} \cap L_{J_2}) g \i \in U^+ \cap L_{J_2} \subset U_{P^+_{I-\{j\}}}$. Note that $w_{J_2} w_K \in W^{J_1}$, we have $g (U^+ \cap L_{J_1}) g \i \in U_{P^+_{I-\{j\}}}$. Hence $$g (U^+ \cap L_{J_1}) t_1 (U_{P^-_K} \cap L_{J_2}) g \i \subset U_{P^+_{I-\{j\}}} g t_1 g \i.$$ 

Let $t \in T'$ with $\a_j(g t_1 t g \i)=1$. Now 
\begin{align*}
g (u'') \i u_1 u_2 t u'' g \i & \in g (U^+ \cap L_{J_1}) t_1 t (U_{P^-_K} \cap L_{J_2}) g \i \subset U_{P^+_{I-\{j\}}} g t_1 t g \i. 
\end{align*}

The conjugation by $T$ preserves $U_{P^+_{I-\{j\}}} (g t_1 t g \i)$. Since $\a_j(g t_1 t g \i)=1$, the conjugation by the root subgroups $U_{\pm \a_j}$ preserves $U_{P^+_{I-\{j\}}} (g t_1 t g \i)$. Therefore, the $P^+_{I-\{j\}}$-conjugacy class $O_1$ of $g (u'') \i u_1 u_2 t u'' g \i$ is contained in $U_{P^+_{I-\{j\}}} (g t_1 t g \i)$. In particular, $\dim O_1 \le \dim U_{P^+_{I-\{j\}}}=\dim P^+_{I-\{j\}}-\rank G-2.$

Let $O$ be the conjugacy class of $u u' t$ in $G$. It is also the conjugacy class of $g (u'') \i u_1 u_2 t u'' g \i$ in $G$. We have $$\dim O \le \dim O_1+\dim G-\dim P^+_{I-\{j\}} \le \dim G-\rank G-2.$$ Therefore $u_1 u_2 t$ is not regular. 
\qed

\subsection{Regularity for $GL_n$}
Now we study the ``if'' part of Theorem \ref{main1} (3). The key part is to show that any element in $B^+_{>0}=T_{>0} U^+_{>0}$ is regular. In this subsection, we prove this claim via linear algebra. 

Let $G=GL_n$ and $g=(a_{ij})_{i,j \in[1,n]}$ in $B^+_{>0}$. In particular, $a_{ij}=0$ if $i<j$, $a_{ij}>0$ if $i\ge j$ and various minors are $>0$. For any $J_1, J_2 \subset \{1, 2, \ldots, n\}$, let $\D_{J_1, J_2}$ be the $(J_1, J_2)$-minor of $g$, i.e., the determinant of the submatrix $(a_{i j})_{i \in J_1, j \in J_2}$. To show that $g$ is regular, it suffices to show that any eigenspace of $g: \BR^n \to \BR^n$ is one-dimensional. 

\begin{lemma}
Let $1 \le k \le n$. Set $$V_k=\{(A_1,...,A_n)\in\BR^n;\sum_ja_{ij}A_j=a_{kk} A_i \text{ for all }i\in[1,n]\}.$$ Then $\dim V_k=1$.
\end{lemma}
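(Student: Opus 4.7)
The plan is to use the lower-triangular form of $g$ and solve $(g - a_{kk} I) A = 0$ by forward substitution. Row $i$ reads $(a_{ii} - a_{kk}) A_i = -\sum_{j < i} a_{ij} A_j$. Setting $S = \{i : a_{ii} = a_{kk}\} = \{k_1 < k_2 < \cdots < k_s\}$, I would observe that for $i \notin S$ this equation determines $A_i$ uniquely from $A_1, \ldots, A_{i-1}$, while for $i \in S$ it imposes a scalar constraint on $A_1, \ldots, A_{i-1}$ and leaves $A_i$ free. Forward propagation gives $A_i = 0$ for $i < k_1$, so $A_{k_1}$ is the first free parameter, and $A_i = f_i A_{k_1}$ for $k_1 < i < k_2$ with explicit constants $f_i$ determined by the recursion; the constraint at $i = k_2$ then takes the form $\Delta \cdot A_{k_1} = 0$ for a specific scalar $\Delta$. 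Once I establish $\Delta \neq 0$, applying the same argument inductively to each consecutive pair $(k_r, k_{r+1})$ forces $A_{k_r} = 0$ for $r < s$, leaving $A_{k_s}$ as the only free parameter and determining $A_i$ for $i > k_s$ uniquely; hence $\dim V_k = 1$. The whole problem thus reduces to showing $\Delta \neq 0$.

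For this key non-vanishing, I would clear denominators in the recursion to obtain $\Delta \cdot \prod_{j = k_1+1}^{k_2 - 1}(a_{jj} - a_{kk}) = \pm \det M$, where $M$ is the submatrix of $g - a_{kk} I$ on rows $R = \{k_1 + 1, \ldots, k_2\}$ and columns $C = \{k_1, \ldots, k_2 - 1\}$, so the task becomes $\det M \neq 0$. The approach is to expand $\det M$ by column multilinearity: each column $l \geq 2$ of $M$ differs from the corresponding column of the $g$-submatrix $M' := g|_{R \times C}$ only at a single entry, by subtraction of $a_{kk} e_{l-1}$. Splitting accordingly and applying Laplace expansion along the replaced columns (each of which has a single non-zero entry) should yield
\[
\det M \,=\, \sum_{T \subset \{2, \ldots, k_2 - k_1\}} a_{kk}^{|T|}\, \D_{R \setminus T^{\ast},\, C \setminus T^{\ast}},
\]
with $T^{\ast} = \{k_1 + l - 1 : l \in T\}$; the signs coming from the $(-a_{kk})^{|T|}$ factors and from the Laplace expansion cancel precisely.

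The remaining task is to check that each minor $\D_{R \setminus T^{\ast},\, C \setminus T^{\ast}}$ is strictly positive. Both index sets share the middle part $\{k_1 + 1, \ldots, k_2 - 1\} \setminus T^{\ast}$, with $k_2$ appended at the top of the row set and $k_1$ prepended at the bottom of the column set. A direct comparison of the sorted index sets shows that each row index strictly exceeds the column index in the same position; this is the dominance condition under which the corresponding minor of $g$ is non-vanishing, and hence strictly positive by the total positivity of $g \in B^+_{>0}$. Together with $a_{kk} > 0$, every summand is strictly positive, so $\det M > 0$ and therefore $\Delta \neq 0$, completing the reduction described in the first paragraph.

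The main obstacle I anticipate is the sign-tracking in the column multilinearity / Laplace expansion step: one must verify that the factor $(-1)^{|T|}$ from the entries $-a_{kk}$ and the Laplace sign $(-1)^{\sum R' + \sum T}$ cancel exactly, and that the resulting index bookkeeping produces row/column sets satisfying the dominance condition on the nose. Once this clean expansion is in place, the positivity of each summand and the subsequent iteration over $(k_r, k_{r+1})$ are essentially automatic.
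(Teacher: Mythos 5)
Your proposal is correct and follows essentially the same approach as the paper: reduce the question to the nonvanishing of the determinant of the block of $g - a_{kk}I$ spanning the ``gap'' between consecutive equal diagonal entries, expand that determinant as $\sum_{T} a_{kk}^{|T|}\, \D_{R\setminus T^{\ast},\, C\setminus T^{\ast}}$, and observe that every summand is a strictly positive (dominant) minor of $g$ times a nonnegative power of $a_{kk}>0$. The paper runs the recursion from the largest index downward and only displays the $2\times 2$ and $3\times 3$ expansions before saying ``continuing in this way,'' whereas you run it from the smallest index upward and state the general Laplace/multilinearity expansion; the sign-tracking you flag does work out cleanly, since each replaced column contributes $(-a_{kk})$ at row position $l-1$, so the Laplace sign $(-1)^{\sum_{l\in T}(l+(l-1))}=(-1)^{|T|}$ cancels $(-a_{kk})^{|T|}$ exactly.
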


\begin{proof} We argue by induction on $n$. 

If $n=1$, the result is obvious. 

Assume now that $n\ge2$. Let $c=a_{kk}$ and $I_c=\{i\in[1,n];a_{ii}=c\}$. Let $(A_1,...,A_n)\in V_k$. If $A_n=0$, then $(A_1,...,A_{n-1})$ satisfy a condition similar to that defining $V_k$. In this case, the statement follow from the induction hypothesis.

If $n\notin I_c$, then $(a_{nn}-c)A_n=0$ and $A_n=0$. 

We assume that $n\in I_c$. If $n-1\in I_c$, then $a_{n-1,n}A_n=0$. Since $a_{n-1,n}\ne0$, we have $A_n=0$ and the statement follows again from the induction hypothesis.

Now we assume that $n\in I_c,n-1\notin I_c$. If $n-2\in I_c$, then we have the linear system
\[\left\{
\begin{aligned} a_{n-2,n-1}A_{n-1}+a_{n-2,n}A_n &=0\\ (a_{n-1,n-1}-c)A_{n-1}+a_{n-1,n}A_n &=0
\end{aligned}
\right.
\]

The determinant of this linear system is
$$a_{n-2,n-1}a_{n-1,n}-(a_{n-1,n-1}-c)a_{n-2,n}=\D_{\{n-2, n-1\}, \{n-1, n\}}+ca_{n-2.n}>0.$$

It follows that $A_{n-1}=A_n=0$. In particular $A_n=0$ and we are done.

We then assume that $n\in I_c$, $n-1\notin I_c$, $n-2\notin I_c$. If $n-3\in I_c$, we have the linear system
\[\left\{
\begin{aligned} a_{n-3,n-2}A_{n-2}+a_{n-3,n-1}A_{n-1}+a_{n-3,n}A_n &=0\\ (a_{n-2,n-2}-c)A_{n-2}+a_{n-2,n-1}A_{n-1}+a_{n-2,n}A_n &=0 \\
(a_{n-1,n-1}-c)A_{n-1}+a_{n-1,n}A_n &=0
\end{aligned}
\right.
\]

The determinant of this system of linear equations is
\begin{align*} &a_{n-3,n-2}a_{n-2,n-1}a_{n-1,n}+(a_{n-2,n-2}-c)(a_{n-1,n-1}-c)a_{n-3,n}\\&
-a_{n-3,n-1}(a_{n-2,n-2}-c)a_{n-1,n}-a_{n-3,n-2}(a_{n-1,n-1}-c)a_{n-2,n}\\&
=
\D_{\{n-3, n-2, n-1\}, \{n-2, n-1, n\}}+c(\D_{\{n-3, n-1\}, \{n-1, n\}}+\D_{\{n-3, n-2\}, \{n-2, n\}})+c^2 a_{n-3,n}.
\end{align*}
Again the determinant is $>0$. It follows that $A_{n-2}=A_{n-1}=A_n=0$. In particular $A_n=0$ and we are done.

Thus we can assume $n\in I_c, n-1\notin I_c, n-2\notin I_c, n-3\notin I_c$.
Continuing in this way we see that we can assume $n\in I_c$ and  $n-1,n-2,...,1$ are not in $I_c$.

Then we can choose $A_n$ at random. From
$(a_{n-1,n-1}-c)A_{n-1}+a_{n-1,n}A_n=0$ where $a_{n-1,n-1}-c\neq 0$ we determine uniquely $A_{n-1}$.
From $(a_{n-2,n-2}-c)A_{n-2}+a_{n-2,n-1}A_{n-1}+a_{n-2,n}A_n=0$ where $a_{n-2,n-2}-c\neq 0$ we
determine uniquely $A_{n-2}$.
Continuing in this way we see that $A_{n-1},A_{n-2},...,A_1$ are uniquely determined
by $A_n$. Hence $\dim V_k=1$.
\end{proof}

\subsection{Conjugating the torus part} To handle the general case, we use a more Lie-theoretic approach.

For any $t \in T_{>0}$, let $I(t)=\{i \in I; \a_i(t)=1\}$. For any $t \in T_{>0}$ and $J \subset I$, let $\bar t_J$ be the unique element $t'$ in the $W_J$-orbit of $t$ with $\a_i(t') \ge 1$ for all $i \in J$. Let $w_{t, J}$ be the unique element in $W_{I(\bar t_J)} \backslash W_J/W_{I(t)}$ such that $\bar t_J=\dot w_{t, J} t \dot w_{t, J} \i$. 

\begin{lemma}\label{lem:conj}
Let $w \in W$ and $J=\supp(w)$. Then for any $t \in T_{>0}$ and $u \in U^+_{w, >0}$, there exists $u' \in U^-_{w_{t, J} \i, >0}$ such that $(u') \i t u u' \in \bar t_{J} U^+_{w, >0}$. 
\end{lemma}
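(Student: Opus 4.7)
My plan is to argue by induction on $\ell := \ell(w_{t,J})$. The base case $\ell = 0$ is immediate: then $\bar t_J = t$, so $u' = 1$ gives $(u')^{-1} t u\, u' = tu \in \bar t_J\, U^+_{w,>0}$.

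For the inductive step $\ell \ge 1$, the first step is Weyl-theoretic. Since $\ell \ge 1$, not every $\a_i(t)$ with $i \in J$ is $\ge 1$, so there exists $i \in J$ with $\a_i(t) < 1$; fix such an $i$ and set $t_1 := \dot s_i t \dot s_i^{-1} \in T_{>0}$, which lies in the same $W_J$-orbit as $t$ so $\overline{(t_1)}_J = \bar t_J$. The identity $\a_i(t) = (w_{t,J}\a_i)(\bar t_J)$ combined with the $J$-dominance of $\bar t_J$ forces $w_{t,J}\a_i \in \Phi_J$ to be a negative root, so $s_i$ is a right descent of $w_{t,J}$ and $\ell(w_{t,J} s_i) = \ell - 1$. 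A short length comparison using the minimality of $w_{t,J}$ in its double coset then shows that $w_{t,J} s_i$ is also minimal in its own double coset in $W_{I(\bar t_J)} \backslash W_J / W_{I(t_1)}$; hence $w_{t_1, J} = w_{t,J} s_i$, and $w_{t,J}^{-1} = s_i\, w_{t_1, J}^{-1}$ is a reduced factorization.

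The central step is the rank-one conjugation: I claim there exists $b_0 > 0$ such that
\[ y_i(-b_0)\, tu\, y_i(b_0) = t_1\, \tilde u \qquad \text{for some } \tilde u \in U^+_{w,>0}. \]
Fix a reduced expression of $w$ and expand $u$ accordingly. Process the rightmost $y_i(b)$ leftward through $tu$ using: the commutation $[x_j(a), y_i(c)] = 1$ for $j \ne i$; the torus rescaling $t\, y_i(c) = y_i(\a_i(t)^{-1} c)\, t$; and the $SL_2$-identity
\[ x_i(a)\, y_i(c) = y_i\!\left(\tfrac{c}{1+ac}\right)\, \a_i^\vee(1+ac)\, x_i\!\left(\tfrac{a}{1+ac}\right) \qquad (a, c > 0), \]
each of which preserves positivity. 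The outcome is an identity
\[ tu\, y_i(b) = y_i\!\bigl(\a_i(t)^{-1} f(b)\bigr)\, t\, \a_i^\vee\!\bigl(F(b)\bigr)\, \tilde u(b), \]
where $\tilde u(b) \in U^+_{w,>0}$ has the same reduced-expression shape as $u$ (because each $\a_i^\vee(1+ac)$-factor only positively rescales the surviving $x_{i_k}$-coordinates), $F(b) > 0$ is continuous, and $f(b)$ is a continuous composition of maps $c \mapsto c/(1+ac)$ with $a > 0$; in particular $f(b)/b \to 1$ as $b \to 0^+$ while $f$ stays bounded as $b \to \infty$. Since $\a_i(t)^{-1} > 1$, the intermediate value theorem produces a positive solution $b_0$ of $\a_i(t)^{-1} f(b) = b$, and a direct bookkeeping check (or uniqueness of the factorization in the big Bruhat cell) identifies $F(b_0) = \a_i(t)^{-1}$; hence $t\, \a_i^\vee(F(b_0)) = t\, \a_i^\vee(\a_i(t)^{-1}) = \dot s_i t \dot s_i^{-1} = t_1$, as required.

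Finally, the inductive hypothesis applied to $t_1 \tilde u \in T_{>0} U^+_{w,>0}$ produces $u'' \in U^-_{w_{t_1, J}^{-1}, >0}$ with $(u'')^{-1} t_1 \tilde u\, u'' \in \bar t_J\, U^+_{w,>0}$. Setting $u' := y_i(b_0) u''$, the reduced factorization $w_{t,J}^{-1} = s_i\, w_{t_1, J}^{-1}$ gives $U^-_{w_{t,J}^{-1}, >0} = y_i(\BR_{>0}) \cdot U^-_{w_{t_1, J}^{-1}, >0}$, so $u' \in U^-_{w_{t,J}^{-1}, >0}$; composing the two conjugations produces $(u')^{-1} tu\, u' \in \bar t_J\, U^+_{w,>0}$. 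The step I expect to be most delicate is the rank-one bookkeeping: verifying that the full processing of $y_i(b)$ through a general reduced expression of $u$ consolidates the accumulated torus contribution into a single $\a_i^\vee$-factor with $F(b_0) = \a_i(t)^{-1}$, and that $\tilde u(b)$ lies in the precise cell $U^+_{w,>0}$ rather than in some larger positive part.
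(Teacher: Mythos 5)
Your proof follows the paper's scheme exactly: induction on $\ell(w_{t,J})$, with the inductive step a rank-one conjugation $y_i(-b)\,tu\,y_i(b)$ for $i\in J$ with $\a_i(t)<1$, replacing $t$ by $s_i(t)$; the Weyl-group bookkeeping ($s_i$ a right descent of $w_{t,J}$, $w_{t_1,J}=w_{t,J}s_i$, and $w_{t,J}^{-1}=s_i\,w_{t_1,J}^{-1}$ reduced) is correct. The only genuine difference is how the rank-one step is executed, and there the paper's method is cleaner than your brute-force push-through. Rather than tracking $y_i(b)$ through a reduced expression of $u$, the paper observes that $tu\,y_i(b)\in G_{w,s_i,>0}=U^-_{s_i,>0}T_{>0}U^+_{w,>0}$, so that $y_i(-b)\,tu\,y_i(b)\in P^+_{\{i\}}$; applying the homomorphism $\pi^+_{\{i\}}:P^+_{\{i\}}\to L_{\{i\}}$ (which collapses $u$ to a single $x_i(a)$) reduces everything to a three-term $SL_2$ identity, and the explicit choice $b=(1-c)/(ca)$ with $c=\a_i(t)$ gives $\pi^+_{\{i\}}\bigl(y_i(-b)\,tu\,y_i(b)\bigr)=s_i(t)\,x_i(ac)$, which forces in one stroke both that the $y_i$-coordinate vanishes and that the torus part is $s_i(t)$, with $\tilde u\in U^+_{w,>0}$ automatic. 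The gap you flag ($F(b_0)=\a_i(t)^{-1}$) does close in your version: the product of the running $y_i$-coefficient with the accumulated $\a_i^\vee$-scalar is invariant under each $SL_2$ move (from $x_i(a)y_i(c)=y_i\bigl(c/(1+ac)\bigr)\a_i^\vee(1+ac)x_i\bigl(a/(1+ac)\bigr)$ one sees $c\mapsto c/(1+ac)$, $F\mapsto F(1+ac)$ preserves $cF$), so after processing $u$ one has $F(b)f(b)=b$; at the fixed point $\a_i(t)^{-1}f(b_0)=b_0$ this yields $F(b_0)=b_0/f(b_0)=\a_i(t)^{-1}$. So your argument is sound once this invariant is noted, but the projection trick avoids all the coefficient-tracking, gives $b$ in closed form, and is the step you should adopt.
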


\begin{proof}
We prove that 

(a) {\it Let $w \in W$, $u \in U^+_{w, >0}$ and $t \in T_{>0}$. Let $i \in J$. If $\a_i(t)<1$, then there exists $b>0$ such that $y_i(-b) t u y_i(b) \in s_i(t) U^+_{w, >0}$.}

Let $c=\a_i(t)<1$. We have $\pi^+_{\{i\}}(u)=x_i(a)$ for some $a>0$. Set $b=\frac{1-c}{c a}>0$. By \cite[\S 2.11]{Lu94}, $y_i(-b) t u y_i(b) \in y_i(-b) y_i(b') t' u'$ for some $b'>0$, $t' \in T_{>0}$ and $u' \in U^+_{w, >0}$. In particular, $y_i(-b) t u y_i(b) \in P^+_{\{i\}}$. Then \begin{align*} \pi^+_{\{i\}}(y_i(-b) t u y_i(b)) &=y_i(-b) t x_i(a) y_i(b)=t y_i(-\frac{1-c}{a}) x_i(a) y_i(\frac{1-c}{ca}) \\ &=t x_i(\frac{a}{c}) \a_i^\vee(\frac{1}{c}).\end{align*} Note that $t \a_i^\vee(\frac{1}{c})=s_i(t)$. (a) is proved. 

Now the statement follows from (a) by induction on $\ell(w_{t, \supp(w)})$. 
\end{proof}

\subsection{Proof of the ``if'' part of Theorem \ref{main1} (3)}
Let $g \in G_{w_1, w_2, >0}$. We assume that $\supp(w_1)=I$. The case where $\supp(w_2)=I$ is proved in a similar way. 

Let $J=\supp(w_2)$. By definition, $\pi^+_J(g) \in G_{\pi_J(w_1), w_2, >0} \subset L_J$. 

Since $\supp(\pi_J(w_1))=\supp(w_2)=J$, we have $\pi^+_J(g)$ is an oscillatory element of $L_{J, \ge 0}$. Thus by Lemma \ref{lem:w-to-1}, there exists $u \in U^-_{w_J, >0}$ such that $u \i \pi^+_J(g) u \in B^+ \cap L_J$. Then $u \i g u \in B^+$. Similar to the proof of Lemma \ref{lem:w-to-1}, we also have that $u \i g u \in G_{w_1, 1, >0}$. 
 
By Lemma \ref{lem:conj}, $u \i g u$ is conjugate to an element in $t \, U^+_{w_1, >0}$ for some $t \in T$ with $\a_i(t) \ge 1$ for all $i$. In particular, $Z_G(t)$ is a standard Levi subgroup $L_{J'}$ of $G$ for some $J' \subset I$. Note that $t \, U^+_{w_1, >0}=t \, U^+_{\pi_{J'}(w_1), >0} U_{P^+_{J'}}$. Since $Z_G(t)^0=L_{J'}$, any element in $t U^+_{w_1, >0}$ is conjugate to an element in $t U^+_{\pi_{J'}(w_1), >0}$. 

For any $u' \in U^+_{\pi_{J'}(w_1), >0}$, we have $t u'=u' t$. This is the Jordan decomposition of the element $t u'$. Since $\supp(\pi_{J'}(w_1))=J'$, we have that $u'$ is a regular unipotent element in $L_{J'}$. Thus $t u'$ is a regular element in $G$. So $g$ is also a regular element in $G$. 
\qed

\subsection{Regular semisimple conjugacy classes in $G_{>0}$}
Set $$T_{>1}=\{t \in T_{>0}; \a_i(t)>1 \text{ for all } i \in I\}.$$ Then any elemenet in $T_{>1}$ is regular and semisimple. By \cite[Theorem 5.6 \& Corollary 8.10]{Lu94}, any element $G_{>0}$ is conjugate to an element in $T_{>1}$. 

Now we prove that the converse is also true. This provides a stronger statement than Theorem \ref{main1} (1) for the cell $G_{>0}$. We also conjecture that the similar statement holds for any cell in $G_{\ge 0}$. 

\begin{theorem}
Let $t \in T_{>1}$ and $C$ be the regular semisimple conjugacy class of $t$ in $G$. Then $C \cap G_{>0} \neq \emptyset$. 
\end{theorem}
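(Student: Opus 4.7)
The plan is to find $g \in C \cap G_{>0}$ by a local implicit function theorem argument based at the boundary point $t$ itself. Although $t$ does not lie in $G_{>0}$ (it belongs to the cell $T_{>0}=G_{1,1,>0}$ in the closure of $G_{>0}$), it is its own $T_{>1}$-representative and is regular semisimple, so perturbing $t$ in $U^+$ and $U^-$ directions should land inside $G_{>0}$, and we can then choose a compensating perturbation of the torus factor to keep the conjugacy class pinned to $C$.

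To set this up, I would fix reduced expressions $w_0 = s_{i_1}\cdots s_{i_N}$ and $w_0 = s_{j_1}\cdots s_{j_N}$ (with $N=\ell(w_0)$) and consider the real analytic map
\[ F:\BR^N\times T\times\BR^N\to G,\quad(a,\tau,b)\mapsto x_{i_1}(a_1)\cdots x_{i_N}(a_N)\,\tau\,y_{j_1}(b_1)\cdots y_{j_N}(b_N), \]
so that $F(a,\tau,b)\in U^+_{>0}T_{>0}U^-_{>0}=G_{>0}$ whenever $a,b\in\BR_{>0}^N$ and $\tau\in T_{>0}$. Since $t$ is regular semisimple with distinct positive real eigenvalues, any real element $g$ sufficiently close to $t$ in $G$ has real, distinct, positive eigenvalues and is therefore $G$-conjugate to a unique $\phi(g)\in T_{>1}$ close to $t$; the assignment $\phi$ is real analytic on this neighborhood and $\phi(t)=t$. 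Let $\tilde\phi:=\phi\circ F$, a real analytic map on a neighborhood of $(0,t,0)$.

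The heart of the proof is the derivative computation at $(0,t,0)$. Given a path $g_s=h_s\tau_s h_s^{-1}$ with $h_0=1$, $\tau_0=t$, one finds
\[ \dot g_0\cdot t^{-1}\;=\;\dot\tau_0\cdot t^{-1}+(\id-\Ad(t))(\dot h_0), \]
and since $\alpha(t)\ne 1$ for every root $\alpha$, the second summand lies in $\bigoplus_\alpha\fkg_\alpha$. Hence $d\phi|_t$ is the projection to the $\fkt$-component under $\fkg=\fkt\oplus\bigoplus_\alpha\fkg_\alpha$ (after right translation by $t^{-1}$). Direct calculation gives $\partial_{a_k}F|_0=E_{i_k}\cdot t$ and $\partial_{b_k}F|_0=t\cdot F_{j_k}$, both purely root-space after right translation, while $\partial_\tau F|_0$ is the canonical tangent to $T$. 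Thus $\partial_\tau\tilde\phi|_0=\id$ and $\partial_a\tilde\phi|_0=\partial_b\tilde\phi|_0=0$.

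By the implicit function theorem there is then a real analytic map $\tau(a,b)$ on a neighborhood $W$ of $(0,0)$ in $\BR^N\times\BR^N$ with $\tau(0,0)=t$ and $\tilde\phi(a,\tau(a,b),b)=t$ throughout $W$. Since $T_{>0}$ is open in $T$ and contains $t$, we have $\tau(a,b)\in T_{>0}$ for $(a,b)$ sufficiently small. Picking any such $(a,b)\in W$ with all $a_k,b_k>0$ and setting $g:=F(a,\tau(a,b),b)$, one obtains $g\in G_{>0}$ with $\phi(g)=t$, so $g$ is $G$-conjugate to $t$, i.e., $g\in C\cap G_{>0}$. The principal technical obstacle is rigorously setting up $\phi$ on a full real neighborhood of $t$ in $G$ (rather than just on $G_{>0}$) and identifying $d\phi|_t$ with the projection onto the $\fkt$-component in the root space decomposition; once this is in place the implicit function theorem step is routine.
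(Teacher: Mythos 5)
Your argument is correct, and it is genuinely different from the one in the paper. The paper works at the Lie algebra level: it invokes Kostant's result (from the Toda lattice paper) that the set of Jacobi elements $\mathfrak g_{>0}=\{\sum_i a_if_i+h+\sum_i b_ie_i;a_i,b_i>0,h\in\mathfrak h_\BR\}$ satisfies $\CI(\mathfrak g_{>0})=\CI(\mathfrak h_+)$, so that each regular element $y\in\mathfrak h_+$ with $\exp(y)=t$ is $G$-conjugate to some $x\in\mathfrak g_{>0}$, and then concludes via the fact (\cite[5.8(c)]{Lu94}) that $\exp$ carries $\mathfrak g_{>0}$ into $G_{>0}$. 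Your proof instead stays at the group level and exploits that $t$ itself lies in the closed cell $T_{>0}=G_{1,1,>0}$ of $G_{\ge0}$; since $t$ is a regular point of the map $G\to T/W$, you can pin the conjugacy invariants by the implicit function theorem while pushing in the $U^+$ and $U^-$ directions into the open cell. This is more elementary in the sense that it replaces Kostant's theorem and the $\exp$-compatibility statement by a local analytic argument, at the cost of some setup (the local section $\phi$ of the adjoint quotient near a regular semisimple point, and its derivative). The Kostant route buys an explicit finite-dimensional set of representatives (Jacobi elements) for all the classes at once, while yours produces only perturbatively small representatives, but suffices for the stated theorem.

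Two small points of hygiene, neither fatal. First, ``distinct positive real eigenvalues'' is $GL_n$ language; what you actually need, and what is true since $t\in T_{>1}$, is that $\a(t)\ne1$ for every root $\a$, so that $T\to T/W$ is \'etale at $t$ and $\id-\Ad(t)$ is invertible on $\bigoplus_\a\fkg_\a$. With that phrasing the existence, uniqueness, and real analyticity of $\phi$ near $t$ (taking values in $T(\BR)$, hence in the open subset $T_{>1}$) follow cleanly, and your derivative identity $\dot g_0t^{-1}=\dot\tau_0t^{-1}+(\id-\Ad(t))(\dot h_0)$ together with $\partial_{a_k}F|_0 t^{-1}\in\fkg_{\a_{i_k}}$ and $\partial_{b_k}F|_0 t^{-1}\in\fkg_{-\a_{j_k}}$ gives exactly $\partial_a\tilde\phi|_0=\partial_b\tilde\phi|_0=0$, $\partial_\tau\tilde\phi|_0=\id$. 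Second, the justification ``since $\a(t)\ne1$, the second summand lies in $\bigoplus_\a\fkg_\a$'' is slightly off: $(\id-\Ad(t))$ lands in $\bigoplus_\a\fkg_\a$ always; the role of $\a(t)\ne1$ is rather to make $\id-\Ad(t)$ surjective there, so that the decomposition exists and $d\phi|_t$ is precisely the $\fkt$-projection. Also ``$T_{>0}$ open in $T$'' should read ``open in $T(\BR)$''. With these wordings tightened, the argument is complete.
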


\begin{proof}
It suffices to consider the case where that $G$ is semisimple. Let $\mathfrak g$ be the Lie algebra of $G$. For $i\in I$, let $e_i\in\mathfrak g$ (resp. $f_i\in\mathfrak g$) be the differential of $x_i$ (resp. $y_i$). Let $\mathfrak h$ be the Cartan subalgebra of $\mathfrak g$ corresponding to $T$. Let $\mathfrak h_\BR$ be the set of real points of $\mathfrak h$ and let $\mathfrak h_+$ be the fundamental open Weyl chamber in $\mathfrak h_\BR$. Let $\exp:\mathfrak g \to G$ be the exponential map. By our assumption on $C$, we can find $y\in\mathfrak h_+$ such that $\exp(y)=t$.

Let $\mathfrak h/W$ be the set of orbits of the standard Weyl group action on $\mathfrak h$. For $x\in\mathfrak g$, the semisimple part of $x$ is $G$-conjugate to an element $x'\in\mathfrak h$ whose Weyl group orbit depends only on $x$; this defines a map $\CI:\mathfrak g \to \mathfrak h/W$. Following \cite{K}, we define the set of ``Jacobi elements'' 
$$\mathfrak g_{>0}=\{\sum_ia_if_i+h+\sum_ib_ie_i;a_i\in\BR_{>0},b_i\in\BR_{>0},h\in\mathfrak h_\BR\}.$$
By \cite[(3.5.26)]{K}, we have $\CI(\mathfrak g_{>0})=\CI(\mathfrak h_+)$. Hence any element in $\mathfrak g_{>0}$ is regular semisimple and contained in an $\BR$-split Cartan subalgebra and we can find $x\in\mathfrak g_{>0}$ such that the semisimple part of $x$ is $G$-conjugate to $y$; since $x$ is semisimple we see that $x$ is $G$-conjugate to $y$. Since $\exp(y)\in C$, we see that $\exp(x)\in C$. 

It remains to show that $\exp$ maps $\mathfrak g_{>0}$ into $G_{>0}$. This property is stated in \cite[5.8(c)]{Lu94} where a property close to it is proved (see \cite[5.9(c)]{Lu94}). But as mentioned in \cite[p.550, footnote]{Lu94}, a similar proof yields \cite[5.8(c)]{Lu94}. 
\end{proof}

\section{Jordan decomposition}

\subsection{Jordan decomposition}
Let $g \in G_{\ge 0}$ and $g=g_sg_u=g_ug_s$ be the Jordan decomposition of $g$ in $G$. According to \cite{Lu19} the centralizer $H=Z_G(g_s)$ of $g_s$ is the Levi subgroup of a parabolic subgroup of $G$; in particular it is connected.

Note that $g_s$ lies in the identity component of $H$. Since $g_s$ is a central element of $H$, we have that $g_s$ is automatically contained in $H_{\underline{\mathbf P_H}, \ge 0}$ associated to any weak pinning $\underline{\mathbf P_H}$ of $H$. 
We conjecture that there exists a weak pinning $\underline{\mathbf P_H}$ of $H$ so that the associated totally nonnegative part $H_{\underline{\mathbf P_H}, \ge 0}$ of $H$ contains the unipotent part $g_u$ of $g$ (and hence contains $g$ itself). Moreover, the desired weak pinning $\underline{\mathbf P_H}$ of $H$ has a representation-theoretic interpretation. 

To do this, we use the positivity property of the canonical basis for the simply-laced groups. For arbitrary group, we may apply the ``folding method'' of \cite{Lu94} once the desired result is proved for the simply-laced groups. 

By \cite[\S 9]{Lu19}, if $g \in G_{\ge 0}$, then $g_s$ is conjugate by an element in $G(\BR)$ to an element in $T_{>0}$. Let $\l$ be a dominant weight of $G$ and $V_\l$  be the corresponding irreducible finite dimensional representation of $G$. We have $V_\l=\oplus_{c\in\BR_{>0}}V_\l(g, c)$, where $V_\l(g, c)$ is the generalized eigenspace of $g:V_\l \to V_\l$ (see \cite[\S 9]{Lu19}). Now $H=Z_G(g_s)$ acts naturally on each $V_\l(g, c)$. Let $c_M=\max\{c\in\BR_{>0};V_\l(c)\neq 0\}$. We show that 

(a) {\it For any $g \in G_{\ge 0}$ and any dominant weight $\l$ of $G$, $V_{\l}(g, c_M)$ is an irreducible representation of $H$ with highest weight $\l$.}

We call an element $t \in T_{>0}$ a {\it standard element} if $\a_i(t) \ge 1$ for all $i \in I$. If $g_s$ is a standard element, then $H$ is a standard Levi subgroup $L_J$ for some $J \subset I$. By definition, $V_\l(g, c_M)$ is spanned by the weight vectors with $\mu$ such that $\mu(g_s)=\l(g_s)$. Note that the condition $\mu(g_s)=\l(g_s)$ holds if and only if $\l-\mu \in \sum_{i \in J} \BZ \a_i$. In this case, $V_\l(g, c_M)$ is the $L_J$-subrepresentation of $V_\l$ generated by the highest weight vectors of $V_\l$. In particular, $V_\l(g, c_M)$ is an irreducible finite dimensional representation of $H$ with the highest weight $\l$. 

Note that any element in $T_{>0}$ is conjugate to a standard element. Thus for any $g \in G_{>0}$, there exists $h \in G(\BR)$ such that $h g_s h \i \in T_{>0}$ and $\a_i(h g_s h \i) \ge 1$ for all $i \in I$. Set $g'=h g h \i$. Then $g'_s=h g_s h \i$, $Z_G(g'_s)=h H h \i$ and $V_{\l}(g', c_M)=h \cdot V_{\l}(g, c_M)$. Hence (a) is proved.

Now we state the conjectural totally nonnegative Jordan decomposition. 

\begin{conjecture}\label{conj}
Assume that $G$ is simply laced. Let $g \in G_{\ge 0}$ and $H=Z_G(g_s)$. Then there exists a weak pinning $\underline{\mathbf P_H}$ of $H$ such that 

\begin{enumerate}
    \item $g_u \in H_{\underline{\mathbf P_H}, \ge 0}$; 
    
    \item $V(g, c_M)_{\underline{\mathbf P_H}, \ge 0}=V(g, c_M) \cap V_{\ge 0}$ for any irreducible representation $V$ of $G$. 
\end{enumerate}
\end{conjecture}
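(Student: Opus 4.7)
The plan is as follows. First I would reduce to the case where $g_s \in T_{>0}$ is standard (i.e.\ $\a_i(g_s) \ge 1$ for all $i$), using the discussion preceding (a) that any semisimple part $g_s$ of $g \in G_{\ge 0}$ is $G(\BR)$-conjugate to such an element. In that case $H = L_J$ with $J = \{i \in I; \a_i(g_s) = 1\}$, and the natural candidate for $\underline{\mathbf P_H}$ is the equivalence class of the inherited pinning $\mathbf P_J = (B^+ \cap L_J, B^- \cap L_J, T, x_i, y_i; i \in J)$. The general case follows by transport of structure, once one verifies that the resulting class depends only on $g_s$ and not on the chosen conjugator.

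The crux is condition (2). I would attempt to show that the canonical basis $\BB_{V_\l,\mathbf P,v}$ of $V_\l$, intersected with $V_\l(g, c_M)$, forms (after normalizing each vector to a half line) the canonical basis of the $L_J$-module $V_\l(g, c_M)$ relative to $\mathbf P_J$. Recall from (a) that $V_\l(g, c_M)$ is the $L_J$-submodule of $V_\l$ generated by the highest weight vector, and is $L_J$-irreducible of highest weight $\l$. What must be established is that $V_\l(g, c_M) \subset V_\l$ is a \emph{based submodule} in the sense of \cite{Lu93}, i.e.\ an element of $\BB_{V_\l,\mathbf P,v}$ lies in $V_\l(g, c_M)$ if and only if its weight $\mu$ satisfies $\l - \mu \in \sum_{i \in J} \BZ_{\ge 0} \a_i$. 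This should follow from Kashiwara's compatibility of global crystal bases with Levi restriction, applied to the component of the crystal containing the highest weight. Combined with the definitions in \S\ref{sec:weakb}, this yields (2).

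Granted (2), condition (1) would follow thus. Every $g \in G_{\ge 0}$ preserves $V_{\ge 0}$ for each $V$; since $g_s$ acts on $V_\l(g, c_M)$ by the scalar $c_M$, the unipotent part $g_u = c_M^{-1} g$ on $V_\l(g, c_M)$ preserves $V_\l(g, c_M) \cap V_{\ge 0}$, which by (2) equals $V_\l(g, c_M)_{\underline{\mathbf P_H}, \ge 0}$. As $\l$ ranges over the $G$-dominant weights and $V$ varies, the spaces $V_\l(g, c_M)$ give a family of $L_J$-irreducibles whose highest weights are dense enough (via tensor products in the category of representations of $L_J$) to apply the criterion \cite[Theorem~1.11]{FZ} to $L_J$; combined with the density of $L_{J,>0}$ in $L_{J,\ge0}$, this yields $g_u \in H_{\underline{\mathbf P_H}, \ge 0}$.

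The principal obstacle is the based submodule assertion of (2); although it is expected from Kashiwara's crystal theory and the theory of based modules in \cite{Lu93}, the precise compatibility of the highest weight $L_J$-component with the full canonical basis of $V_\l$ requires a careful argument combining Kashiwara operators with positivity of structure constants of the canonical basis, and is the step most likely to resist a routine treatment. A secondary difficulty is the final step of (1), where the Fomin--Zelevinsky criterion of \S\ref{sec:weakb} is stated for strict positivity; adapting it to characterize $H_{\underline{\mathbf P_H}, \ge 0}$ via mere preservation of nonnegative cones demands a density/limit argument that must remain compatible with the choice of weak pinning on $H$ produced in the reduction step.
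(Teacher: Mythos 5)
The statement you address is explicitly a \emph{conjecture} in the paper: the authors establish it only for $g$ in cells $G_{w_1,w_2,>0}$ with $\supp(w_2)\subset\supp(w_1)$ or $\supp(w_1)\subset\supp(w_2)$ (Proposition~\ref{prop:conj}) and for $GL_3$ by direct computation; the general case is left open. So a complete proof sketch must contain a gap, and indeed yours does, in the very first step.

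Your reduction to standard $g_s$ ``by transport of structure'' is invalid. If $h\in G(\BR)$ conjugates $g_s$ into a standard element of $T_{>0}$, then $hgh^{-1}$ is in general \emph{not} in $G_{\ge0}$, so you cannot import the hypothesis $g\in G_{\ge0}$ to the standard situation. More fundamentally for condition~(2), the cone $V_{\ge0}$ is a fixed subset of $V$ determined by the canonical basis of the \emph{ambient} pinning of $G$; it is not $G(\BR)$-conjugation-equivariant. Thus the equality $V(g,c_M)_{\underline{\mathbf P_H},\ge0}=V(g,c_M)\cap V_{\ge0}$ does not transport: the left-hand side moves by $h$ while the right-hand side does not. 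Controlling exactly how the conjugating element interacts with $V_{\ge0}$ is the entire difficulty; in the paper this is done by Lemma~\ref{u-v}, which applies only to conjugators $u\in U^-_{w,>0}$ with $w\in W^J$, and Proposition~\ref{prop:conj} succeeds precisely because for those special cells one can bring $g_s$ to standard form by a product of such controlled conjugators (Lemmas~\ref{lem:w-to-1} and~\ref{lem:conj}). For a general $g\in G_{\ge0}$ no such controlled conjugator is known, and this is exactly what is open.

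Conversely, the step you flag as the ``principal obstacle'' is not an obstacle at all. Once $g_s\in T_{>0}$ is standard, the fact that $V_\l(g,c_M)$ is the based highest-weight $L_J$-submodule of $V_\l$ is the content of (a) in \S3.1 together with the known compatibility of canonical bases with Levi highest-weight components (cf.\ the references to \cite{Lu96} and \cite{BH} in the proof of Lemma~\ref{u-v}). And condition~(1) is then immediate without any Fomin--Zelevinsky density argument: $g_u=g_s^{-1}g\in T_{>0}\cdot G_{\ge0}=G_{\ge0}$, it is unipotent, and it lies in $L_J=Z_G(g_s)$, hence in $L_J\cap G_{\ge0}=L_{J,\ge0}$. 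Your effort is invested in the easy half, while the genuinely hard reduction is waved away.
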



By \S\ref{sec:weakb} and \S\ref{sec:weakp}, the weak pinning on $H$, if it exists, is uniquely determined by $V(g, c_M) \cap V_{\ge 0}$. Moreover, as the totally nonnegative part of $H$ is determined by a representation-theoretic method, it only depends on the semisimple part $g_s$, but not on the element $g$ itself. In particular, Conjecture \ref{conj} claims that the totally nonnegative part $H_{\ge 0}$ is ``canonical''.

\subsection{Non simply-laced groups} 
We follow \cite[\S 1.6]{Lu94}. Let $\tilde G$ be the simply connected (algebraic) covering of the derived group of $G$ and $\pi: \tilde G \to G$ be the covering homormorphism. The pinning on $G$ induces a pinning on $\tilde G$. Then there exists a semisimple, simply connected, simply laced group $\dot G$, a pinning on $\dot G$ and an automorphism $\s: \dot G \to \dot G$ compatible with the pinning such that there is an isomorphism $\dot G^\s \to \tilde G$ so that the associated pinning on $\dot G^\s$ (defined in \cite[\S 1.5]{Lu94}) is compatible with the pinning on $\tilde G$. In this case, we have $G_{\ge 0}=(Z_G)_{>0} \, \pi(\dot G^\s_{\ge 0})$. 

Let $g \in G_{\ge 0}$. Then there exists $z \in (Z_G)_{\ge 0}$ and $\dot g \in \dot G_{\ge 0}^\s$ such that $g=z \pi(\dot g)$. Then $g_s=z \pi((\dot g)_s)$ and $g_u=\pi((\dot g)_u)$. Set $H=Z_G(g_s)$ and $\dot H=Z_{\dot G}((\dot g)_s)$. Then $H=Z_G \, \pi(\dot H^\s)$. 

We assume that Conjecture \ref{conj} holds for $\dot G$. Since $\dot H=s(\dot H)$ and the totally nonnegative part $\dot H_{\ge 0}$ is ``canoncial'', we have in particular that $H_{\ge 0}$ is $\s$-stable. Set $H_{\ge 0}=(Z_G)_{>0} \, \pi(\dot H^\s_{\ge 0})$. By Conjecture \ref{conj}, $\dot g_u \in \dot H_{\ge 0}^\s$. Thus $$g_u=\pi(\dot g_u) \in \pi(\dot H^\s_{\ge 0}) \subset H_{\ge 0}.$$ 

As a summary, we have

(a) {\it Suppose that Conjecture \ref{conj} holds for $\dot G$. Then for any Levi subgroup $H$ of a parabolic subgroup of $G$, there is a ``canonical'' totally nonnegative part $H_{\ge 0}$ such that for any $g \in G_{\ge 0}$ with $Z_G(g_s)=H$, we have $g_u \in H_{\ge 0}$.}


\subsection{Some evidence of Conjecture \ref{conj}}
In this subsection, we verify Conjecture \ref{conj} in some special cases. We start with a lemma on $V_{\ge 0}$. 

\begin{lemma}\label{u-v}
Let $V$ be an irreducible finite dimensional representation of $G$ and $\b$ be the canonical basis of $V$. Let $J \subset I$ and $V_J \subset V$ be the $L_J$-submodule spanned by the highest weight vector of $V$. Let $V_{\ge 0}=\sum_{b \in \b} \BR_{\ge 0} b$ and $(V_J)_{\ge 0}=V_J \cap V_{\ge 0}$. Then for any $w \in W^J$ and $u \in U^-_{w, >0}$, we have $$u \cdot (V_J)_{\ge 0}=(u \cdot V_J) \cap V_{\ge 0}.$$
\end{lemma}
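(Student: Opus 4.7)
The inclusion $u \cdot (V_J)_{\ge 0} \subseteq (u \cdot V_J) \cap V_{\ge 0}$ is immediate: $u \in U^-_{w, >0} \subset G_{\ge 0}$ acts on $V$ preserving $V_{\ge 0}$ by the canonical-basis positivity recalled in \S\ref{sec:weakb}, and the inclusion into $u \cdot V_J$ is tautological.

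For the reverse inclusion, suppose $v \in V_J$ with $u v \in V_{\ge 0}$, and expand $v = \sum_{b \in \b_J} \lambda_b b$. Each $\lambda_b$ is real since $u$ is represented by a real matrix in $\b$, so the task is to show $\lambda_b \ge 0$. Since $\b$ is $T$-weight graded and $V_J$ is the sum of weight spaces of weights $\lambda - \sum_{j \in J} n_j \alpha_j$ ($n_j \in \BZ_{\ge 0}$), we have $\b_J = \b \cap V_J$, a decomposition $V = V_J \oplus V_1$ with $V_1 = \bigoplus_{b \in \b \setminus \b_J} \BC b$, and $V_{\ge 0} = (V_J)_{\ge 0} \oplus (V_1 \cap V_{\ge 0})$ as cones. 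Decomposing $u = u_2 u_1$ with $u_1 \in U_{P^-_J}$ and $u_2 = \pi^-_J(u) \in L_{J, \ge 0}$, the subgroup $U_{P^-_J}$ strictly increases the height $\sum_{i \notin J} n_i$ and hence acts as the identity on $V / V_1 \cong V_J$. Consequently the projection $\pi\colon V \to V_J$ sends $u v$ to $u_2 v$, and in particular $u_2 v \in (V_J)_{\ge 0}$. This alone does not force $v \in (V_J)_{\ge 0}$, since $u_2$ need not be positively invertible on $V_J$, so the remaining information from the $V_1$-component of $u v$ must be used.

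The main step is to exhibit, for each $b \in \b_J$, a canonical basis element $b^\star \in \b$ such that the matrix entry $M_{b^\star, b}$ of $u \colon V_J \to V$ (in bases $\b_J, \b$) is strictly positive while $M_{b^\star, b''} = 0$ for every $b'' \in \b_J \setminus \{b\}$; this forces $(uv)_{b^\star} = M_{b^\star, b} \lambda_b \ge 0$ and hence $\lambda_b \ge 0$. The hypothesis $w \in W^J$ (i.e., $w(\alpha_j) > 0$ for all $j \in J$) is essential here, since it ensures that the ``$w$-extremal'' companions $b^\star$ of distinct $b \in \b_J$ can be chosen distinctly in $\b$. I plan to construct $b^\star$ by induction on $\ell(w)$, choosing a reduced expression of $w$ whose last letter $i_n$ lies outside $J$ (which exists precisely because $w \in W^J$), writing $u = u' \, y_{i_n}(a_n)$, and combining the induction hypothesis for $u'$ with the crystal action of $y_{i_n}(a_n)$ on canonical basis elements. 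The main obstacle is that $w' = s_{i_1} \cdots s_{i_{n-1}}$ generally fails to lie in $W^J$, so the induction must be strengthened to a statement tracking companions for all $b \in \b_J$ and all distinguishing $b'' \in \b_J \setminus \{b\}$ simultaneously; a secondary difficulty is weight-space multiplicity in $V$, which I expect to control using the compatibility between the canonical basis and the parabolic Bruhat decomposition in the simply laced setting assumed in the paper.
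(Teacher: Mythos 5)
Your reduction of the problem is on target: the easy inclusion is exactly the positivity argument in the paper, and the correct strategy for the reverse inclusion is indeed to exhibit, for each $b\in\b\cap V_J$, a ``companion'' $b^\star\in\b$ in whose coordinate of $u\cdot v$ only the coefficient of $b$ survives. But the proposal stops precisely where the real work begins: you do not actually construct $b^\star$, you only sketch an induction on $\ell(w)$ and acknowledge two unresolved obstacles (that truncations of a reduced word for $w\in W^J$ need not lie in $W^J$, and weight multiplicity). As written, this is a plan, not a proof.

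The missing ingredient is the braid group action on the canonical basis, which lets one take $b^\star=\dot w\cdot b$ directly rather than building it up letter by letter. The key fact (a special case of \cite{Lu96}, see also \cite[Lemma 2.8]{BH}) is: if $b\in\b$ and $x_i(a)\cdot b=b$ for all $a$, then $\dot s_i\cdot b\in\b$. Now pick a reduced word $w=s_{i_1}\cdots s_{i_n}$. The hypothesis $w\in W^J$ gives, for each $j$, that $(s_{i_{j+1}}\cdots s_{i_n})^{-1}\a_{i_j}\notin\Phi_J$, so $x_{i_j}(a)$ fixes $(\dot s_{i_{j+1}}\cdots \dot s_{i_n})\cdot b$ for every $b\in\b\cap V_J$; iterating the fact above yields $\dot w\cdot b\in\b$. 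One then expands $u\cdot b$ as $\BR_{>0}\,\dot w\cdot b$ plus an $\BR_{\ge0}$-combination of canonical basis elements whose weights differ from $\mathrm{wt}(\dot w\cdot b)$ by elements of $\sum_j\BN\,s_{i_1}\cdots s_{i_{j-1}}\a_{i_j}$, and the second use of $w\in W^J$ — namely $w(\Phi_J)\cap\sum_j\BN\,s_{i_1}\cdots s_{i_{j-1}}\a_{i_j}=\{0\}$ — is exactly what guarantees that $\dot w\cdot b$ cannot be absorbed by the $u\cdot b''$ for $b''\ne b$ in $\b\cap V_J$. This single observation replaces your induction entirely and sidesteps both obstacles you flagged (weight multiplicity is automatically handled because $\dot w\cdot b$ is itself a canonical basis element, so it is a legitimate ``coordinate'' to read off). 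The preliminary step you propose — factoring $u=u_2u_1$ with $u_1\in U_{P^-_J}$ and projecting to $V_J$ — is not needed for the argument and does not by itself give you anything beyond what you already note.
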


\begin{proof}
By \cite[\S 3.2]{Lu94}, $u \cdot V_{\ge 0} \subset V_{\ge 0}$. Thus $u \cdot (V_J)_{\ge 0} \subset (u \cdot V_J) \cap V_{\ge 0}$. Now we prove the other direction. By definition, $(V_J)_{\ge 0}=\sum_{b \in \b \cap V_J} \BR_{\ge 0} b$. We recall the following result. 

(a) {\it Let $i \in I$ and $b \in \b$ such that $x_i(a) \cdot b=b$ for all $a \in \BR$. Then $\dot s_i \cdot b \in \b$.}

This is a special case of the braid group action on the canonical basis \cite{Lu96}. See also \cite[Lemma 2.8]{BH}. 

Let $w=s_{i_1} \cdots s_{i_n}$ be a reduced expression. Since $w \in W^J$, for any $j$ we have $(s_{i_{j+1}} \cdots s_{i_n}) \i \a_{i_j} \notin \Phi_J$. By definition, $x_{i_j}(a) (s_{i_{j+1}} \cdots s_{i_n}) \cdot b=(s_{i_{j+1}} \cdots s_{i_n}) \cdot b$ for any $1 \le j \le n$, $a \in \BR$ and $b \in V_J$. Thus by (a), for any $b \in \b \cap V_J$, we have $\dot s_{i_n} \cdot b \in \b$, $\dot s_{i_{n-1}} \dot s_{i_n} \cdot b \in \b$, \ldots, $\dot w \cdot b \in \b$. 

By definition, \[\tag{b} u \cdot b \in \BR_{>0} \dot w \cdot b+\sum_{b' \in \b, \text{wt}(b')>\text{wt}(b), \text{wt}(b')-\text{wt}(b) \in \sum_{j=1}^n \BN s_{i_1} \cdots s_{i_{j-1}} \a_{i_j}} \BR_{\ge 0} b'.\] Since $w \in W^J$, $w(\Phi_J) \cap  \sum_{j=1}^n \BN s_{i_1} \cdots s_{i_{j-1}} \a_{i_j}=\{0\}$. In other words, the only element in $\{\dot w \cdot b'; b' \in \b \cap V_J\}$ that occurs in the right hand of (b) with positive coefficient is $\dot w \cdot b$.

Let $\sum_{b \in \b \cap V_J} c_b b \in V_J$ with $u \cdot \sum_{b \in \b \cap V_J} c_b b \in V_{\ge 0}$. Note that $$u \cdot \sum_{b \in \b \cap V_J} c_b b=\sum_{b \in \b \cap V_J} c_b u \cdot b \in \sum_{b \in \b \cap V_J} c_b \BR_{>0} \dot w \cdot b+\sum_{b' \in \b, b' \notin \dot w \cdot V_J} \BR_{\ge 0} b'.$$ Then $c_b \ge 0$ for all $b \in \b \cap V_J$. Thus $\sum_{b \in \b \cap V_J} c_b b \in (V_J)_{\ge 0}$. The Lemma is proved. 
\end{proof}

\begin{proposition}\label{prop:conj}
Let $w_1, w_2 \in W$ with $\supp(w_2) \subset \supp(w_1)$. Then for any $g \in G_{w_1, w_2, >0}$ or $g \in G_{w_2, w_1, >0}$, conjecture \ref{conj} holds. 
\end{proposition}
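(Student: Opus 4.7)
The plan is to combine the structure of the proof of the ``if'' direction of Theorem \ref{main1}(3) with Lemma \ref{u-v}. By the $B^+ \leftrightarrow B^-$ symmetry it suffices to treat the case $g \in G_{w_1, w_2, >0}$; set $J_1 = \supp(w_1) \supset J_2 = \supp(w_2)$. Since $u_1 \in U^+ \cap L_{J_1}$, $t_0 \in T \subset L_{J_1}$, and $u_2 \in U^- \cap L_{J_2} \subset L_{J_1}$, the element $g$ lies in $L_{J_1}$; viewed inside the reductive group $L_{J_1}$, the $U^+$-support $w_1$ is full, so the hypothesis of the ``if'' part of Theorem \ref{main1}(3) is met for $g \in L_{J_1}$.

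Running that proof inside $L_{J_1}$: Lemma \ref{lem:w-to-1} applied to the oscillatory element $\pi^+_{J_2, L_{J_1}}(g)$ of $L_{J_2}$ yields $v_1 \in U^-_{w_{J_2}, >0}$; Lemma \ref{lem:conj} applied within $L_{J_1}$ produces $v_2 \in U^-_{w^\ast, >0}$ with $w^\ast \in W_{J_1}^{J'_1}$. Their product $v := v_1 v_2$ satisfies $v^{-1} g v = t\, u^\#$, where $t \in T$ has $\a_i(t) \ge 1$ for all $i \in J_1$, $u^\# \in U^+_{w_1, >0}$, and $J'_1 = \{i \in J_1 : \a_i(t) = 1\}$. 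Setting $J' = \{i \in I : \a_i(t) = 1\}$, the element $t u^\#$ admits the Jordan decomposition $(y t y^{-1})(y \pi^+_{J'}(u^\#) y^{-1})$ for a suitable $y \in U_{P^+_{J'}}$; hence $g_s = (vy) t (vy)^{-1}$, $g_u = (vy) \pi^+_{J'}(u^\#) (vy)^{-1}$, and $H = (vy) L_{J'} (vy)^{-1}$.

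I take $\underline{\mathbf P_H}$ to be the transport under conjugation by $vy$ of the standard pinning of $L_{J'}$ inherited from $G$. Conjecture \ref{conj}(1) is then immediate because $\pi^+_{J'}(u^\#) \in U^+_{\pi_{J'}(w_1), >0} \cap L_{J'} \subset L_{J', \ge 0}$. For Conjecture \ref{conj}(2), the key observation is that $U_{P^+_{J'}}$ acts trivially on $V_{J'}$ (it fixes the highest weight vector and is normalized by $L_{J'}$); therefore $V(g, c_M) = (vy) V_{J'} = v V_{J'}$, the transported weak basis of $V(g, c_M)$ reduces to the $v$-image of the canonical basis of $V_{J'}$, and its nonnegative cone is $v (V_{J'})_{\ge 0}$. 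Lemma \ref{u-v} with $J = J'$ then identifies this cone with $v V_{J'} \cap V_{\ge 0} = V(g, c_M) \cap V_{\ge 0}$, yielding Conjecture \ref{conj}(2).

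The main obstacle is the hypothesis of Lemma \ref{u-v}: the Demazure product $w = w_{J_2} \ast w^\ast$ labelling the cell of $v$ must lie in $W^{J'}$. For $i \in J' \setminus J_1$ this is automatic, since $w \in W_{J_1}$ fixes $\a_i$; the delicate case is $i \in J'_1$. Here I would exploit that $w^\ast \in W_{J_1}^{J'_1}$ by Lemma \ref{lem:conj}, together with the constraint $\a_i(t^\ast) > 1$ for $i \in J_2$ forced by Lemma \ref{lem:w-to-1}, to show $w^\ast(\a_i) \notin \Phi_{J_2}^+$ for $i \in J'_1$, so that the left Demazure action of $w_{J_2}$ on $w^\ast(\a_i)$ preserves positivity. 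This is the step where the hypothesis $\supp(w_2) \subset \supp(w_1)$ is used most delicately.
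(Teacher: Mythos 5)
Your proposal follows the same skeleton as the paper's proof (Lemma \ref{lem:w-to-1}, then Lemma \ref{lem:conj}, then identify $H$ as a conjugate of a standard Levi, then invoke Lemma \ref{u-v} for part (2)), but it contains a genuine gap at the step where you identify the centralizer.

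After your conjugation by $v = v_1 v_2$ you obtain $v^{-1}gv = t\,u^\#$ where $t = \bar t_{J_1}$ satisfies $\alpha_i(t) \ge 1$ only for $i \in J_1$; for $i \notin J_1$ there is no control on $\alpha_i(t)$. You then set $J' = \{i \in I : \alpha_i(t) = 1\}$ and claim $H = (vy) L_{J'} (vy)^{-1}$, i.e.\ that $Z_G(t) = L_{J'}$. This fails in general: if $t$ is not $I$-dominant, the root subsystem $\{\alpha \in \Phi : \alpha(t) = 1\}$ need not be of the form $\Phi_{J'}$ for any subset of simple roots. A concrete instance already occurs in $G = GL_3$ with $J_1 = J_2 = \{1\}$: then $\alpha_1(t) > 1$ while $\alpha_2(t)$ is unconstrained, and if $\alpha_1(t)\alpha_2(t) = 1$ then $(\alpha_1 + \alpha_2)(t) = 1$ with neither simple root fixed by $t$, so $Z_G(t)$ has type $A_1$ but $J' = \emptyset$. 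Your proposed $H_{\underline{\mathbf P_H}, \ge 0}$ would then be a torus, which is not the centralizer.

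The paper closes exactly this gap by introducing one more conjugation: set $\bar t = \bar t_I$ (the $I$-dominant representative in the full $W$-orbit of $t$), $J = I(\bar t)$, and let $w \in {}^{J_1}W^J$ be the unique element with $\bar t = \dot w^{-1}\bar t_{J_1}\dot w$. Since $\bar t$ is dominant, one does have $Z_G(\bar t) = L_J$, whence $H = (u_-u_+\dot w) L_J (u_-u_+\dot w)^{-1}$, and the weak pinning on $H$ is transported from the standard one on $L_J$ by $u_-u_+\dot w$. This $\dot w$ is absent from your argument; restoring it both fixes the identification of $H$ and resolves what you flagged as "the main obstacle" for applying Lemma \ref{u-v}, because the paper factors $u_-\dot w$ and treats the two factors separately (using $\dot w \cdot V_{J,\ge 0} \subset V_{\ge 0}$ from the proof of Lemma \ref{u-v} together with $u_- \cdot V_{\ge 0} \subset V_{\ge 0}$), rather than needing the label of $u_-$ alone to lie in $W^{J}$.

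Your sketch of how to verify the $W^{J'}$ condition is moot once the centralizer issue is fixed, and in any case it addresses only part (2); part (1) already depends on the incorrect identification of $H$.
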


\begin{proof}
We prove the case where $g \in G_{w_1, w_2, >0}$. The case $g \in G_{w_2, w_1, >0}$ is proved in the same way. 

Let $J_i=\supp(w_i)$. By Lemma \ref{lem:w-to-1}, there exists $u_1 \in U^-_{w_{J_2}, >0}$ such that $u_1 \i g u_1 \in t U^+_{w_1, >0}$ for some $t \in T_{>0}$ with $\a_i(t)>1$ for all $i \in J_2$. By Lemma \ref{lem:conj}, there exists $u_2 \in U^-_{w_{t, J_1} \i, >0}$ such that $u_2 \i u_1 \i g u_1 u_2 \in \bar t_{J_1} U^+_{w_1, >0}$. By definition, $w_{t, J_1} \in W_{J_1} \cap W^{J_2}$. Thus $\ell(w_{J_2} w_{t, J_1} \i)=\ell(w_{J_2})+\ell(w_{t, J_1})$. We set $u_-=u_1 u_2 \in U^-_{w_{J_2} w_{t, J_1} \i, >0}$. It is also easy to see that there exists $u_+ \in L_{J_1} \cap U_{P^+_{I(\bar t_{J_1})}}$ such that 
\begin{itemize}
    \item the semisimple part $u_+ \i u_- \i g_s u_- u_+$ of $u_+ \i u_- \i g u_- u_+$ is $\bar t_{J_1}$;
    \item the unipotent part $u_+ \i u_- \i g_u u_- u_+$ of $u_+ \i u_- \i g u_- u_+$ is in $U^+_{\ge 0} \cap L_{I(\bar t_{J_1})}$. 
\end{itemize} 

Let $\bar t=\bar t_I$ and $J=I(\bar t)$. Let $w$ be the unique element in ${}^{J_1} W^J$ with $\bar t=\dot w \i \bar t_{J_1} \dot w$. Hence the semisimple part of $\dot w \i u_+ \i u_- \i g u_- u_+ \dot w$ is $\bar t$. Let $H=Z_G(g_s)$. Since $Z_G(\bar t)=L_J$, we have $H=(u_- u_+ \dot w) L_J (u_- u_+ \dot w) \i$. The pinning $\mathbf P_H$ of $H$ is defined to be the conjugate of the standard pinning of $L_J$ by the element $u_- u_+ \dot w$. In particular, $H_{\underline{\mathbf P_H}, \ge 0}=(u_- u_+ \dot w) L_{J, \ge 0} (u_- u_+ \dot w) \i$. 

Since $\bar t=\dot w \i \bar t_{J_1} \dot w$, we have $\Phi_{I(\bar t_{J_1})} =w(\Phi_{J}) \cap \Phi_{J_1}$. Since $w \in {}^{J_1} W^J$, we have $w \i(I(\bar t_{J_1})) \subset J$. Therefore $\dot w \i (U^+_{\ge 0} \cap L_{I(\bar t_{J_1})}) \dot w \subset U^+_{\ge 0} \cap L_J$. Thus $$g_u \in (u_- u_+) (U^+_{\ge 0} \cap L_{I(\bar t_{J_1})}) (u_- u_+) \i \subset (u_- u_+ \dot w) L_{J, \ge 0} (u_- u_+ \dot w) \i=H_{\underline{\mathbf P_H}, \ge 0}.$$ 

Part (1) of Conjecture \ref{conj} is proved. 

Let $V$ be an irreducible finite dimensional representation of $G$ and $v$ be a highest weight vector of $V$. Let $V_J \subset V$ be the $L_J$-submodule generated by $v$. Then $V(\bar t, c_M)=V_J$ and $V(g, c_M)=u_- u_+ \dot w \cdot V_J$. The nonnegative part $V(g, c_M)_{\underline{\mathbf P_H}, \ge 0}$ of $V(g, c_M)$ determined by the pinning $\mathbf P_H$ of $H$ equals $u_- u_+ \dot w \cdot V_{J, \ge 0}$. Since $\Phi_{I(\bar t_{J_1})}=w(\Phi_{J}) \cap \Phi_{J_1}$ and $w \in {}^{J_1} W$, we have $\dot w \i u_+ \dot w \in U_{P^+_J}$. Thus $u_- u_+ \dot w \cdot V_J=u_- \dot w u \cdot V_J=u_- \dot w \cdot V_J$ for some $u \in U_{P^+_J}$. Thus $$V(g, c_M)_{\underline{\mathbf P_H}, \ge 0}=u_- \dot w \cdot V_{J, \ge 0}.$$

By the proof of Lemma \ref{u-v}, $\dot w \cdot V_{J, \ge 0} \subset V_{\ge 0}$. Since $u_- \in U^-_{\ge 0}$, we have $u_- \cdot V_{\ge 0} \subset V_{\ge 0}$. Thus $V(g, c_M)_{\underline{\mathbf P_H}, \ge 0} \subset V_{\ge 0}$. 

On the other hand, let $\b$ be the canonical basis of $V$. Suppose that $\sum_{b \in \b \cap V_J} c_b b \in V_J$ with $u_- \dot w \cdot \sum_{b \in \b \cap V_J} c_b b \in V_{\ge 0}$. By (b) in the proof of Lemma \ref{u-v}, we have $$u_- \dot w \cdot \sum_{b \in \b \cap V_J} c_b b \in \sum_{b \in \b \cap V_J} c_b \BR_{>0} \dot w' \cdot b+\sum_{b' \in \b, \b' \notin \dot w' \cdot V_J} \BR_{\ge 0} b'.$$ Thus $c_b \ge 0$ for all $b \in \b \cap V_J$ and $\sum_{b \in \b \cap V_J} c_b b \in (V_J)_{\ge 0}$. 

Therefore $V(g, c_M)_{\underline{\mathbf P_H}, \ge 0}=u_- \dot w \cdot V_{J, \ge 0}=u_- \dot w \cdot V_J \cap V_{\ge 0}=V(g, c_M) \cap V_{\ge 0}$. Part (2) of Conjecture \ref{conj} is proved. 
\end{proof}

\subsection{Verification of conjecture \ref{conj} for $GL_3$} In this subsection, we prove conjecture \ref{conj} for $G=GL_3$. Let $W=S_3$ be the Weyl group of $G$ and $\{s_1, s_2\}$ be the set of simple reflections of $W$. By Proposition \ref{prop:conj}, if $g \in G_{\ge 0}$ but $g \notin G_{s_1, s_2, >0} \cup G_{s_2, s_1, >0}$, then the conjecture \ref{conj} holds for $g$. Now we prove the conjecture \ref{conj} for $g \in G_{s_1, s_2, >0}$. The elements in $G_{s_2, s_1, >0}$ are handled in the same way. 

The element $g$ can be written as $g=t u_1 u_2$ for some $u_1 \in U^+_{s_1, >0}$, $u_2 \in U^-_{s_2, >0}$ and $t \in T_{>0}$. 

If $\a_1(t) \neq 1$ and $\a_2(t) \neq 1$, then $g$ is regular semisimple and the conjecture \ref{conj} is obvious for $g$. 

If $\a_1(t)=\a_2(t)=1$, then $g_s=t$ and $H=G$. In this case, the weak pinning on $H$ is the equivalance class of the pinning on $G$ we fixed in the beginning. The conjecture \ref{conj} holds for $g$. 

It remains to consider the case where $\a_1(t)=1$ and $\a_2(t) \neq 1$. (The case where $\a_1(t) \neq 1$ and $\a_2(t)=1$ is proved in the same way.) In this case, there exists $a \in \BR$ such that $y_2(-a) g y_2(a)=t u_1$. We have $Z_G(t)=L_{\{1\}}$ and $H=y_2(a) L_{\{1\}} y_2(-a)$. The pinning on $G$ induces the pinning $\mathbf P_{\{1\}}=(T, B^+ \cap L_{\{1\}}, B^- \cap L_{\{1\}}, x_1, y_1)$ on the Levi subgroup $L_{\{1\}}$. The pinning $\mathbf P_H$ on $H$ is obtained from $\mathbf P_{\{1\}}$ by conjugating $y_2(a)$. Moreover $V(g_s, c_M)=y_2(a) \cdot V(t, c_M)$ and $V(g_s, c_M)_{\underline{\mathbf P_H}, \ge 0}=y_2(a) \cdot V(t, c_M)_{\ge 0}$.

If $\a_2(t)<1$, then $V(t, c_M)$ is the $L_{\{1\}}$-submodule of $V$ generated by the lowest weight vector of $V$ and $V(g_s, C_M)=V(t, c_M)$. In this case, $V(g_s, c_M)_{\underline{\mathbf P_H}, \ge 0}=V(t, c_M)_{\ge 0}=V(g_s, C_M) \cap V_{\ge 0}$. 

If $\a_2(t)>1$, then $V(t, c_M)$ is the $L_{\{1\}}$-submodule of $V$ generated by the highest weight vector of $V$. Moreover, we have $a>0$ since $u_2 \in U^-_{s_2, >0}$. By Lemma \ref{u-v}, $V(g_s, c_M)_{\underline{\mathbf P_H}, \ge 0}=y_2(a) \cdot V(t, c_M)_{\ge 0}=V(g_s, C_M) \cap V_{\ge 0}$. 

This finishes the verification of the conjecture \ref{conj} for $G=GL_3$.

\end{document}